\newtheorem{theorem}{Theorem}
\newtheorem*{theorem*}{Theorem}
\newtheorem{lemma}{Lemma}[section]
\newtheorem{corollary}[lemma]{Corollary}
\newtheorem*{corollary*}{Corollary}
\newtheorem{proposition}[lemma]{Proposition}
\theoremstyle{definition}
\newtheorem{definition}[lemma]{Definition}
\newtheorem{example}[lemma]{Example}
\newtheorem{remark}[lemma]{Remark}
\newcommand{\ad}{\operatorname{ad}}
\DeclareMathOperator{\sgn}{sgn}
\title[$b$-equation]{Local well-posedness of the higher-dimensional $b$-equation}
\author[Justin Valletta]
{Justin Valletta}
\address{Justin Valletta: Florida State University}
\email{jvalletta@fsu.edu}
\date{March 4, 2025.}
\keywords{}
\subjclass[2010]{%
}
\begin{document}

\begin{abstract}
The higher-dimensional $b$-equation is a family of PDEs, introduced by Holm and Staley (2003), that describe the motion of shallow water waves in $n$-dimensions. It expresses the invariance of the Lie-transport of the momentum one-form density associated with the fluid in $b$-dimensions. The constant $b$ can also be viewed as a balance parameter between fluid convection and fluid stretching/expansion. In this article, we interpret this family of PDEs as the geodesic equation of a right-invariant affine connection on the diffeomorphism group of $\mathbb{R}^n$. Using this framework and the methods of Ebin and Marsden (1970), we show local well-posedness of the $b$-equation with a Fourier multiplier as the inertia operator. This is achieved by formulating the $b$-equation as a smooth ODE on a Hilbert manifold, applying Picard-Lindel\"{o}f, and transferring back to the smooth category by showing that there is no loss of spatial regularity during the time evolution. 
\end{abstract}

\maketitle

\setcounter{tocdepth}{1}
\tableofcontents

\section{Introduction}
\subsection{Background}
Mechanics on Lie groups began with Poincar\'{e} \cite{poincare1901forme}. In 1966, Arnold \cite{arnold1966} extended this to the infinite-dimensional setting, and derived Euler's equation for the motion of an incompressible, perfect fluid as the geodesic equation of the right-invariant $L^2$-metric on the group of volume-preserving diffeomorphisms. Since then, many other PDEs from physics, mostly hydrodynamics, have been recast as geodesics of a right-invariant metric (and more generally, a right-invariant connection) on an appropriate diffeomorphism group. Such equations are called Euler-Arnold equations. Examples include the Camassa-Holm \cite{camassa1993integrable,kouranbaeva1999camassa,misiolek1998shallow}, the Hunter-Saxton \cite{hunter1991dynamics,lenells2007hunter}, the modified Constantin-Lax-Majda \cite{constantin1985simple,escher2010}, the Korteweg-De Vries \cite{ovsienko1987korteweg}, and the Degasperis-Procesi shallow water equation \cite{Kolev2009}. There are many other interesting examples; see \cite{arnold1998topological,vizman2008} for more.

Interpreting PDEs as geodesic equations on infinite-dimensional Lie groups has been used to obtain local well-posedness results, due originally to Ebin and Marsden \cite{ebin1970groups}, who famously used the geometric interpretation of the incompressible Euler equation to prove local well-posedness on an arbitrary compact manifold. They smoothly extended the metric and the geodesic spray from the Fr\'{e}chet manifold of smooth diffeomorphisms to the Hilbert manifold of $H^k$ diffeomorphisms, allowing one to apply the Picard-Lindel\"{o}f theorem. By showing that there is no loss of spatial regularity during the time evolution, they were able to transfer their local well-posedness result on the finite regularity Sobolev spaces to the smooth category. This idea of smoothly extending the spray was then used by Bauer, Constantin, Escher, Gay-Balmaz, Kolev, and others \cite{constantin2003geodesic,gay2009well,escher2011degasperis,escher2014fractional,bauer2015local,bauer2020wellposedness} to obtain similar local well-posedness results for geodesics on diffeomorphism groups with various inertia operators. 

We are concerned in this article with the $n$-dimensional family of $b$-equations, which take the form 
\begin{equation}\label{b-equation}
 \Omega_t+\nabla_U\, \Omega+(\nabla U)^T\Omega+(b-1)\operatorname{div}(U)\Omega=0,\qquad \Omega=AU,
 \end{equation}
where $U:[0,T)\times\mathbb{R}^n\to\mathbb{R}^n$ is a time-dependent vector field (e.g. fluid velocity), $A$ is a positive-definite, symmetric operator called the \textit{inertia operator}, $b\in\mathbb{R}$ is a dimensionless parameter, and the vector field $\Omega$ is often regarded as the momentum. This equation was first introduced in dimension one by Degasperis, Holm, and Hone \cite{Degasperis2002} with the $H^1$ Sobolev inertia operator $A=1-\partial_x^2$ as a generalization of the Camassa-Holm \cite{camassa1993integrable} and Degasperis-Procesi equations \cite{degasperis1999asymptotic}, shortly after the original derivation of the Degasperis-Procesi equation. It reads
\begin{equation}\label{1D}
m_t+um_x+bu_xm=0,\qquad m=u-u_{xx},
\end{equation}
where $b=2$ corresponds to the Camassa-Holm equation and $b=3$ the Degasperis-Procesi equation, both of which are the only completely integrable members of the one-dimensional $b$-family with the $H^1$ Sobolev inertia operator \cite{Degasperis2002}. Holm and Staley subsequently studied \eqref{1D} in \cite{holm2003wave} where they explored the effect of the parameter $b$ on the stability of various solitary wave solutions and introduced the higher-dimensional version \eqref{b-equation}. The $b$-equation family includes many other well-known PDEs as members, such as the Hunter-Saxton equation \cite{hunter1991dynamics}, the modified Constantin-Lax-Majda equation \cite{constantin1985simple,escher2010}, the $b$-Camassa-Holm equation \cite{degasperis2003integrable}, the Proudman-Johnson equation \cite{okamoto2000some}, and the Okamoto-Sakojo-Wunsch equation \cite{okamoto2008generalization}. The equation one obtains depends on the dimension, the choice of inertia operator, and the value of $b.$

When $b=2$, equation \eqref{b-equation} becomes the EPDiff equation, short for Euler-Poincar\'{e} equation on the diffeomorphism group, introduced by Holm and Marsden \cite{holm2005momentum} in the context of fluid dynamics with the $H^1$ Sobolev inertia operator. As the name implies, the EPDiff equation is the geodesic equation of the right-invariant Riemannian metric induced by the inertia operator $A$ on the diffeomorphism group of $\mathbb{R}^n$. The EPDiff equation has since been studied extensively by Constantin, Escher, Holm, Kolev, Michor, Misiolek, Preston, and others \cite{constantin2002geometric, holm2005momentum,holm2009geodesic,misiolek2010fredholm, escher2014fractional,bauer2015local,kolev2016,bruveris2017completeness,bauer2020fractional,bauer2024liouville}. Notably, Bauer, Escher, and Kolev \cite{bauer2015local} proved local well-posedness of the higher-dimensional EPDiff equation with a fractional order Fourier multiplier as the inertia operator by smoothly extending the spray and using an Ebin and Marsden no-loss no-gain argument. 

When $b\ne2$, however, much less work has been done on the \textit{higher-dimensional} $b$-equation \eqref{b-equation}. Local well-posedness has been established for arbitrary $b$ by Escher and Yin \cite{escher2008well}, but only in dimension one. Moreover, for these values of $b,$ it is known that the $b$-equation in dimension one with the $H^1$ Sobolev inertia operator does not correspond to the geodesic equation of a right-invariant Riemannian \textit{metric} \cite{Kolev2009, escher2010}. One can, however, weaken this geometric interpretation, as Escher and Kolev \cite{Kolev2009,escher2011degasperis} did when they provided a \textit{non-metric} geometric interpretation for the Degasperis-Procesi equation. They showed that this equation is in fact a geodesic equation of a right-invariant affine \textit{connection}. 

In this article, we generalize the result of Escher and Kolev \cite{escher2011degasperis} to the higher-dimensional $b$-equation by showing that it corresponds to the geodesic equation of a right-invariant affine connection, which we can write explicitly. The notion of the geodesic spray therefore makes sense for this equation, and we can demonstrate local well-posedness of the higher-dimensional $b$-equation by smoothly extending the geodesic spray and employing an Ebin and Marsden type of no-loss, no-gain argument.

\subsection{Our contributions} We provide a geometric interpretation of the higher-dimensional $b$-equation \eqref{b-equation} as the geodesic equation of a right-invariant affine connection $\nabla$ on the diffeomorphism group of $\mathbb{R
}^n$; see Theorem \ref{beqnconnection1} in Section \ref{EAsection}. We also highlight the Lie-transport structure of the $b$-equation, and prove a Kelvin-Noether circulation theorem for this equation, akin to Kelvin's circulation theorem for perfect fluids described by the Euler equations; see Section \ref{sec:kelvinnoether}. We then use the geometric framework to establish local well-posedness of the $b$-equation with a Fourier multiplier of order $r\ge1$ as the inertia operator in $H^s(\mathbb{R}^n,\mathbb{R}^n)$ for any $s> n/2+1$ with $s\ge r$. This is achieved by formulating the $b$-equation as an ODE on the Hilbert manifold $T\mathcal{D}^s(\mathbb{R}^n)$, allowing one to obtain local well-posedness in $H^s(\mathbb{R}^n,\mathbb{R}^n)$ via Picard-Lindel\"{o}f. We then show, using the classic Ebin and Marsden no-loss no-gain argument, that there is no loss of spatial regularity during the time evolution of the $b$-equation. This allows one to transfer the local well-posedness result in $H^s(\mathbb{R}^n,\mathbb{R}^n)$ to the smooth category, obtaining local well-posedness in $H^\infty(\mathbb{R}^n,\mathbb{R}^n).$ Our main result is the following theorem.

\begin{theorem*}[Corollary~\ref{mainresult} in Section \ref{wellposednesssection}]
    The n-dimensional $b$-equation \eqref{b-equation} with a Fourier multiplier of class $\mathcal{E}^r$ with $r\geq1$ as the inertia operator has for any initial data $u_0\in H^\infty(\mathbb{R}^n,\mathbb{R}^n)$, a unique non-extendable smooth solution $u\in C^\infty(J,H^\infty(\mathbb{R}^n,\mathbb{R}^n))$. 
\end{theorem*}

\subsection{Acknowledgments} I would like to graciously thank Martin Bauer and Stephen Preston for dedicating significant time and patience in helping me understand many of the beautiful concepts herein. I would also like to give a special thanks to Darryl Holm, who helped me to understand and formulate the Lie-transport formula and the Kelvin-Noether circulation theorem. My correspondence with him dramatically increased my appreciation for the physical meaning of this transport equation, and Section \ref{sec:kelvinnoether} would not have been possible without his assistance. This work was partially supported by the BSF under Grant No. 2022076.  

\section{Background material}\label{background}

Here we collect the necessary background material for the results in this article. 
This consists primarily of defining the appropriate Sobolev spaces and the corresponding groups of diffeomorphisms on which our equations will be formulated. We describe how to extend the usual definition of integer-order Sobolev spaces in terms of weak (or distributional) derivatives to fractional orders. We also briefly comment on the manifold structure of the relevant diffeomorphism groups, along with a few notable properties that will be used subsequently. Lastly, we define the class of inertia operators for which our local well-posedness result for the $b$-equation will hold.

\subsection{The Sobolev space $H^s(\mathbb{R}^n,\mathbb{R}^n)$}

We use diffeomorphism groups that are defined in terms of Sobolev spaces, which we will now discuss. The space $H^s(\mathbb{R}^n,\mathbb{R}^n)$ is the Hilbert space 
\[H^s(\mathbb{R}^n,\mathbb{R}^n):=\{f=(f_1,\dots,f_n) \, | \,f_i\in H^s(\mathbb{R}^n,\mathbb{R}),i=1,\dots,n\}\]
with the $H^s$-norm given by
\[\|f\|_{H^s}:=\left(\sum_{i=1}^n\|f_i\|_{H^s}\right)^{1/2},\]
where $\|f_i\|_{H^s}$ is the norm on $H^s(\mathbb{R}^n,\mathbb{R}).$ When $s\in\mathbb{Z}_{\geq0},$ then $H^s(\mathbb{R}^n,\mathbb{R})$ is the Hilbert space of functions $g\in L^2(\mathbb{R}^n,\mathbb{R})$ such that the distributional derivatives $D^\alpha g$ live in $L^2(\mathbb{R}^n,\mathbb{R})$ for every multi-index $\alpha\in\mathbb{Z}^n_{\geq0}$ of order $|\alpha|\leq s.$ It has norm
\[\|g\|_s:=\left(\sum_{|\alpha|\leq s}\|D^\alpha g\|_{L^2}^2\right)^{1/2}=\left(\sum_{|\alpha|\leq s}\int_{\mathbb{R}^n}|D^\alpha g|^2\,dx\right)^{1/2}.\]
An alternative characterization of the space $H^s(\mathbb{R}^n,\mathbb{R})$ in terms of the Fourier transform is useful if one wants to allow $s\in\mathbb{R}_{\geq0}.$ For $f\in L^2(\mathbb{R}^n,\mathbb{R})$, define the Fourier transform $\mathcal{F}f$ by
\[(\mathcal{F}f)(\xi):=(2\pi)^{-n/2}\int_{\mathbb{R}^n}f(x)e^{-i x\cdot\xi}\,dx,\]
which then has inverse given by 
\[(\mathcal{F}^{-1}g)(x):=(2\pi)^{-n/2}\int_{\mathbb{R}^n}g(\xi)e^{i x\cdot\xi}\,d\xi.\]
Whenever the integral exists, it holds that $\mathcal{F}f\in L^2(\mathbb{R}^n,\mathbb{R})$ by the Plancherel theorem $\|f\|_{L^2}=\|\mathcal{F}f\|_{L^2}$. Arguably the most notable property of the Fourier transform is that the differential operator $D^\alpha$ becomes the multiplication operator $\mathcal{F}f\mapsto (i\xi)^\alpha\mathcal{F}f$ under the transform. Concretely, $\mathcal{F}(D^\alpha f)=(i\xi)^\alpha\mathcal{F}f$. These properties imply that when $f\in H^s(\mathbb{R}^n,\mathbb{R}),$ we have $\xi^\alpha\mathcal{F}f\in L^2(\mathbb{R}^n,\mathbb{R})$. In particular, $\xi^{s}_j\mathcal{F}f\in L^2(\mathbb{R}^n,\mathbb{R})$ for every $j\in\{1,\dots,n\}.$ Now apply Holder's inequality with exponents $p$ and $s$ to the vectors $u:=(1,\xi^2_1,\dots,\xi^2_n),v:=(1,\dots,1)\in\mathbb{R}^{n+1}$ to find
\[1+|\xi|^2=u\cdot v\leq \|u\|_p\|v\|_s=(n+1)^{1/p}(1+\xi^{2s}_1+\dots+\xi^{2s}_n)^{1/s}.\]
This implies
\[\int_{\mathbb{R}^n}(1+|\xi|^2)^s|\mathcal{F}f|^2\,dx\leq(n+1)^{s/p}\int_{\mathbb{R}^n}(1+\xi^{2s}_1+\dots+\xi^{2s}_n)|\mathcal{F}f|^2\,dx<\infty,\]
from which we see that $(1+|\xi|^2)^{s/2}\mathcal{F}f\in L^2(\mathbb{R}^n,\mathbb{R})$ if and only if $f\in H^s(\mathbb{R}^n,\mathbb{R})$. Moreover, if we define the norm $\|f\|^*_{H^s}$ by 
\[\|f\|^*_{H^s}:=\|(1+|\xi|^2)^{s/2}\mathcal{F}f\|_{L^2},\]
one can show it is equivalent to the $\|f\|_{H^s}$ norm in that
\[C_s^{-1}\|f\|_{H^s}\le\|f\|^*_{H^s}\le C_s\|f\|_{H^s},\]
for some constant $C_s\ge1$. This allows us to define $H^s(\mathbb{R}^n,\mathbb{R})$ for $s\in\mathbb{R}_{\geq0}$ by
\[H^s(\mathbb{R}^n,\mathbb{R})=\{f\in L^2(\mathbb{R}^n,\mathbb{R})\,|\,\|f\|^*_{H^s}<\infty\}.\]
These non-integer order Sobolev spaces satisfy a few notable properties which we collect in the following proposition. 

\begin{proposition}[Inci, Kappeler, Topalov \cite{inci2012regularity}; Section 2]\label{sobolevproperties}
    For any $s\in\mathbb{R}_{\ge0}$, the following hold.
    \begin{enumerate}
        \item The space of smooth functions with compact support $C_c^\infty(\mathbb{R}^n,\mathbb{R}^n)$ is dense in $H^s(\mathbb{R}^n,\mathbb{R}^n).$
        \item If $s\ge n/2$, then for any integer $r$, the space $H^{s+r}(\mathbb{R}^n,\mathbb{R}^n)$ can be embedded onto the space $C_0^r(\mathbb{R}^n,\mathbb{R}^n)$ of $C^r$ functions vanishing at infinity.
        \item If $s>n/2$ and $q\ge 1$ with $q\le s$, then pointwise multiplication extends to a bounded bilinear mapping 
        \[H^s(\mathbb{R}^n,\mathbb{R}^n)\times H^q(\mathbb{R}^n,\mathbb{R}^n)\to H^q(\mathbb{R}^n,\mathbb{R}^n).\]
    \end{enumerate}
\end{proposition}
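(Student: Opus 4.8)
These are classical facts about Bessel-potential spaces. I would work componentwise, reducing to the scalar space $H^s(\mathbb{R}^n,\mathbb{R})$, and throughout use the equivalent Fourier-side norm $\|f\|_{H^s}^*=\|\langle\xi\rangle^s\mathcal{F}f\|_{L^2}$ with $\langle\xi\rangle:=(1+|\xi|^2)^{1/2}$ (the equivalence has already been recorded above). The single elementary observation behind all three items is that $\langle\xi\rangle^{-s}\in L^2(\mathbb{R}^n)$ precisely when $s>n/2$. I would establish the three assertions in the order (1), (2), (3).

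\textbf{Part (1).} The plan is to show first that the Schwartz space $\mathcal{S}(\mathbb{R}^n)$ is $H^s$-dense and then that $C_c^\infty$ is $H^s$-dense in $\mathcal{S}$. For the first point, given $f\in H^s$ put $h:=\langle\cdot\rangle^s\mathcal{F}f\in L^2$, pick $h_k\in\mathcal{S}$ with $h_k\to h$ in $L^2$, and set $f_k:=\mathcal{F}^{-1}(\langle\cdot\rangle^{-s}h_k)\in\mathcal{S}$; then $\|f_k-f\|_{H^s}^*=\|h_k-h\|_{L^2}\to0$. For the second point, fix $\chi\in C_c^\infty$ with $\chi\equiv1$ near the origin and set $\chi_M(x):=\chi(x/M)$; for $\phi\in\mathcal{S}$ one has $\chi_M\phi\in C_c^\infty$ and $\chi_M\phi\to\phi$ in $H^s$ as $M\to\infty$ --- for integer $s$ by the Leibniz rule together with dominated convergence (every term carries either a rapidly decaying derivative of $\phi$ times $1-\chi_M$, or a factor $M^{-|\gamma|}(D^\gamma\chi)(\cdot/M)$ supported where $|x|\gtrsim M$), and for general $s$ by interpolating between consecutive integer orders.

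\textbf{Part (2).} For $s>n/2$ and $f\in H^s$, Cauchy--Schwarz gives $\|\mathcal{F}f\|_{L^1}\le\|\langle\xi\rangle^{-s}\|_{L^2}\|f\|_{H^s}^*<\infty$, so by Fourier inversion and the Riemann--Lebesgue lemma $f$ is continuous, bounded, and vanishes at infinity; hence $H^s\hookrightarrow C_0\hookrightarrow L^\infty$ continuously. For an integer $r\ge0$ and $f\in H^{s+r}$ with $s>n/2$, each $D^\alpha f$ with $|\alpha|\le r$ lies in $H^{s+r-|\alpha|}\subseteq H^s\hookrightarrow C_0$, and by part (1) (approximating $f$ by Schwartz functions, so that derivatives converge uniformly) these distributional derivatives coincide with the classical ones, giving $H^{s+r}\hookrightarrow C_0^r$ continuously. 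The borderline $s=n/2$ is the one case the crude argument misses, and there I would defer to the sharper estimates in \cite{inci2012regularity}.

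\textbf{Part (3) and the main obstacle.} I would prove the two endpoints directly and then interpolate. For $q=0$: $\|fg\|_{L^2}\le\|f\|_{L^\infty}\|g\|_{L^2}\lesssim\|f\|_{H^s}\|g\|_{L^2}$ by part (2). For $q=s$ ($H^s$ an algebra): since $\mathcal{F}(fg)=(2\pi)^{-n/2}\mathcal{F}f*\mathcal{F}g$ and $\langle\xi\rangle^s\le C_s\big(\langle\xi-\eta\rangle^s+\langle\eta\rangle^s\big)$ for a constant $C_s$ (an elementary consequence of $|\xi|^2\le2|\xi-\eta|^2+2|\eta|^2$), one has the pointwise bound
\[
\langle\xi\rangle^s|\mathcal{F}(fg)(\xi)|\lesssim\big[(\langle\cdot\rangle^s|\mathcal{F}f|)*|\mathcal{F}g|\big](\xi)+\big[|\mathcal{F}f|*(\langle\cdot\rangle^s|\mathcal{F}g|)\big](\xi);
\]
taking $L^2$-norms, applying Young's inequality $\|u*v\|_{L^2}\le\|u\|_{L^2}\|v\|_{L^1}$, and using $\|\mathcal{F}f\|_{L^1},\|\mathcal{F}g\|_{L^1}\lesssim\|\cdot\|_{H^s}^*$ from part (2) yields $\|fg\|_{H^s}^*\lesssim\|f\|_{H^s}^*\|g\|_{H^s}^*$. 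Thus, for fixed $f\in H^s$, the multiplication operator $M_f:g\mapsto fg$ is bounded on $H^0$ and on $H^s$ with operator norm $\lesssim\|f\|_{H^s}$ in each case, so complex interpolation (using $[H^0,H^s]_\theta=H^{\theta s}$ with equivalent norms) makes $M_f$ bounded on $H^q$ for every $q\in[0,s]$, again with norm $\lesssim\|f\|_{H^s}$; restricting to $1\le q\le s$ and recalling that $(f,g)\mapsto fg$ is bilinear gives the claim. The genuinely non-routine point of the whole proposition is exactly this estimate (3): the two endpoints are easy, but the intermediate range $1\le q\le s$ cannot be reached by dumping all $q$ derivatives on a single factor, so one must invoke interpolation (equivalently, a Littlewood--Paley/paraproduct decomposition); the borderline embedding at $s=n/2$ in (2) is the other spot where something beyond crude Fourier estimates is needed, and for it I would simply cite \cite{inci2012regularity}.
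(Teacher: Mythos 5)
The paper offers no proof of this proposition at all --- it is imported verbatim from Inci, Kappeler, and Topalov \cite{inci2012regularity} --- so your sketch can only be judged against the standard literature, and on those terms it is correct and essentially the canonical argument: Fourier-side density plus cutoffs for (1), the $\|\mathcal{F}f\|_{L^1}\le\|\langle\xi\rangle^{-s}\|_{L^2}\|f\|_{H^s}^*$ trick plus Riemann--Lebesgue for (2), and for (3) the two endpoints $q=0$ (via the $L^\infty$ embedding) and $q=s$ (via the triangle inequality for $\langle\xi\rangle^s$ under convolution) followed by complex interpolation of the fixed-factor multiplier $M_f$ using $[L^2,H^s]_\theta=H^{\theta s}$. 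Your diagnosis that the intermediate range of (3) is the only non-routine step is accurate; the cited reference gets there by a direct convolution estimate with a splitting of the frequency domain rather than by interpolation, but the two routes are interchangeable and yield the same operator bound $\lesssim\|f\|_{H^s}$. The one point to fix: in part (2) you propose to ``defer to the sharper estimates'' of \cite{inci2012regularity} for the borderline $s=n/2$, but no sharper estimate exists --- the embedding $H^{n/2}(\mathbb{R}^n)\hookrightarrow L^\infty$ is simply false (the classical $\log\log$ counterexamples), and the hypothesis in \cite{inci2012regularity} is the strict inequality $s>n/2$. The ``$s\ge n/2$'' in the statement above is a transcription slip, harmless for this paper since the proposition is only ever invoked with $s>n/2+1$; so rather than deferring, you should note that your Cauchy--Schwarz argument already covers the entire range in which the claim holds and that the hypothesis should read $s>n/2$.
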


The first property in the above proposition shows that the spaces $H^s(\mathbb{R}^n,\mathbb{R}^n)$ can also be defined as the closure of $C_c^\infty(\mathbb{R}^n,\mathbb{R}^n)$ with respect to the $\|\cdot\|^*_{H^s}$ norm.

\subsection{The group of smooth Sobolev diffeomorphisms}
The methods used herein fundamentally rely on the diffeomorphism group being a regular Fr\'{e}chet-Lie group. A Fr\'{e}chet-Lie group is an infinite dimensional Lie group modeled on a Fr\'{e}chet space. Such a Lie group is called \textit{regular} if every smooth path $v:[0,1]\to\mathfrak{g}$ has a Lagrangian flow that can be integrated, i.e. every $v\in\mathfrak{g}$ generates a flow that can be uniquely determined. The groups of diffeomorphisms of smooth compact manifolds are regular, but this is not the case for the full diffeomorphism group $\operatorname{Diff}^\infty(\mathbb{R}^n)$ of smooth, orientation-preserving diffeomorphisms on $\mathbb{R}^n$. Hence we will work on a subgroup of $\operatorname{Diff}^\infty(\mathbb{R}^n)$ that has this regularity property. There are a few different options, but we will use the subgroup of smooth, orientation-preserving diffeomorphisms $\operatorname{Diff}_{H^\infty}(\mathbb{R}^n)$ which differ from the identity by a function in the Sobolev space 
\[H^\infty(\mathbb{R}^n,\mathbb{R}^n)
:=\bigcap_{s\ge0}H^s(\mathbb{R}^n,\mathbb{R}^n).\]
Explicitly, we define
\[\operatorname{Diff}_{H^\infty}(\mathbb{R}^n):=\{\operatorname{id}+f\,:\,f\in H^\infty(\mathbb{R}^n,\mathbb{R}^n)\text{ and } \operatorname{det}(\operatorname{id}+df)>0\}.\]
This space was proposed by Djebail and Hermas \cite{hermas2010existence}, and therein it was shown that it is a regular Fr\'{e}chet-Lie group. Its Lie algebra is the space $\mathfrak{X}_\infty(\mathbb{R}^n)=H^\infty(\mathbb{R}^n,\mathbb{R}^n)$ of $H^\infty$ vector fields \cite{bauer2015local}. We must also introduce the Hilbert approximations $\mathcal{D}^s(\mathbb{R}^n)$ of 
$\operatorname{Diff}_{H^\infty}(\mathbb{R}^n)$, defined for $s>n/2+1$ by 
\[
\mathcal{D}^s(\mathbb{R}^n):=
\{\operatorname{id}+f\,:\,f\in H^s(\mathbb{R}^n,\mathbb{R}^n)\text{ and } \operatorname{det}(\operatorname{id}+df)>0\}.\]
We need the assumption $s>n/2+1$ so that one can define a smooth chart from $\mathcal{D}^s(\mathbb{R}^n)$ to $H^s(\mathbb{R}^n,\mathbb{R}^n)$ via $\varphi\mapsto\varphi-\operatorname{id}$ \cite{inci2012regularity}. Hence, $\mathcal{D}^s(\mathbb{R}^n)$ has the structure of a smooth Hilbert manifold, but not a Lie group since composition and inversion are not smooth \cite{bauer2015local}. $\operatorname{Diff}_{H^\infty}(\mathbb{R}^n)$ therefore has the advantage of being a Lie group, but only a Fr\'{e}chet one, while $\mathcal{D}^s(\mathbb{R}^n)$ has the advantage of being a Hilbert manifold, but not a Lie group. Note also that the tangent bundle $T\mathcal{D}^s(\mathbb{R}^n)$ is trivial:
    \[T\mathcal{D}^s(\mathbb{R}^n)\cong \mathcal{D}^s(\mathbb{R}^n)\times H^s(\mathbb{R}^n,\mathbb{R}^n)\]
    since $\mathcal{D}^s(\mathbb{R}^n)$ is an open subset of the Hilbert space $H^s(\mathbb{R}^n,\mathbb{R}^n)$ \cite{bauer2015local}.

\subsection{Fourier multipliers}\label{fouriermultipliersection}

We follow the presentation in \cite{bauer2015local}. First observe that if $A$ is a constant coefficient differential operator then
\begin{equation}\label{multiplier}
\widehat{Au}(\xi)=a(\xi)\hat{u}(\xi)
\end{equation}
for some polynomial $a:\mathbb{R}^n\to\mathbb{C}$. Notice that we can view the image of this polynomial $a(\xi)$ as a continuous linear transformation that acts on $\hat{u}\in\mathbb{C}^n$ via multiplication. Motivated by this observation, we define a \textit{Fourier multiplier} $A$ by the formula \eqref{multiplier} for an arbitrary complex-valued function  $a(\xi)\in \mathcal{L}(\mathbb{C}^n,\mathbb{C}^n)$, that we will view as a continuous linear transformation. If $a$ and $u$ are sufficiently regular, we can invert the Fourier transform to recover $A$ explicitly. The function $a$ is called the \textit{symbol} of the Fourier multiplier $A$, and it clearly determines the operator $A$. Hence Fourier multipliers are often denoted by $A=\textbf{op}(a(\xi)).$ 

\begin{example}
    The Hilbert transform $H$ of a real-valued function $u(t)$ is typically defined by
    \[H(u)(t):=\frac{1}{\pi}\,\text{p.v.}\int_{-\infty}^\infty\frac{u(\tau)}{t-\tau}\,d\tau.\]
    Notice that $H(u)(t)=u(t)*\frac{1}{\pi t}$. By the convolution theorem for the Fourier transform, one finds
    \[\widehat{H(u)}(\xi)=-i \sgn(\xi)\hat{u}(\xi),\]
    and hence the Hilbert transform is a multiplier with symbol $a(\xi)=-i \sgn(\xi).$
\end{example}

\begin{example}
The inertia operator that gives rise to the $H^s$ Sobolev metric is defined by
\[\Lambda^{2s}:=\operatorname{\bf{op}}((1+|\xi|^2)^s),\]
and has order at least 1 if we require $s\geq1/2.$ 
\end{example}

We will restrict ourselves to a special class of multipliers which are well-defined on $H^\infty(\mathbb{R}^n,\mathbb{R}^n)$. They are general enough to include the multipliers we care about (e.g. Sobolev inertia operator).

\begin{definition}
    Fix $r\in\mathbb{R}.$ A Fourier multiplier $A=\textbf{op}(a(\xi))$ is of class $\mathcal{S}^r$ if $a\in C^\infty(\mathbb{R}^n,\mathcal{L}(\mathbb{C}^n,\mathbb{C}^n))$ satisfies for each multi-index $\alpha\in \mathbb{N}^n$,
    \[\|\partial^\alpha a(\xi)\|\lesssim(1+|\xi|^2)^{(r-|\alpha|)/2},\]
    where $\partial^\alpha a(\xi)$ is the $\alpha^{th}$ Fr\'{e}chet derivative of the linear transformation $a(\xi)$, and $\| . \|$ the operator norm.
\end{definition}

\begin{example} 
Any linear, constant-coefficient differential operator of order $r$ is in the class $\mathcal{S}^r$. The Sobolev inertia operator $A=\textbf{op}(1+|\xi|^2)^{r/2}$ is also in this class.
\end{example}

\begin{definition}
    A Fourier multiplier $A=\textbf{op}(a(\xi))$ in the class $\mathcal{S}^r$ is called \textit{elliptic} if 
    $a(\xi)\in\mathcal{GL}(\mathbb{C}^n,\mathbb{C}^n)$ for all $\xi\in\mathbb{R}^n$ and 
    \[\|a(\xi)^{-1}\|\lesssim (1+|\xi|^2)^{-r/2},\quad \forall\xi\in\mathbb{R}^n.\]
\end{definition}

\begin{definition}
    A Fourier multiplier $A\in\mathcal{L}(H^\infty(\mathbb{R}^n,\mathbb{R}^n),H^\infty(\mathbb{R}^n,\mathbb{R}^n))$ is in the class $\mathcal{E}^r$ if the following hold:
    \begin{enumerate}
        \item $A=\textbf{op}(a(\xi))$ is of class $\mathcal{S}^r$;
        \item $A=\textbf{op}(a(\xi))$ is elliptic;
        \item The symbol $a(\xi)$ is Hermitian and positive-definite for all $\xi\in\mathbb{R}^n.$
    \end{enumerate}
\end{definition}

\begin{remark}
    The class $\mathcal{S}^r$ with $r\ge1$ is required to show boundedness of the $d$-th Fr\'{e}chet differentials of the twisted operator $A_{\varphi}=R_{\varphi}\circ A\circ R_{\varphi^{-1}}$. This boundedness is used, as in \cite{bauer2015local}, to show that the map 
\[\varphi\mapsto A_\varphi:=R_{\varphi}\circ A\circ R_{\varphi^{-1}},\qquad \mathcal{D}^s(\mathbb{R}^n)\to\mathcal{L}(H^s(\mathbb{R}^n,\mathbb{R}^n),H^{s-r}(\mathbb{R}^n,\mathbb{R}^n))\]
is smooth when $s>n/2+1$ and $s-r\ge0$. We will use this fact to show smoothness of the geodesic spray for the higher-dimensional $b$-equation. Many of the conditions defining the class $\mathcal{E}^r$ originate from the metric case (that is, when the PDE of interest is a geodesic with respect to the Levi-Civita connection). In that context, we must require certain conditions on the inertia operator to ensure that the induced metric is smooth. For example, the requirement that the symbol $a(\xi)$ be Hermitian and positive-definite ensures that an inner product of the form 
    \[\langle u_1,u_2 \rangle:=\int_{\mathbb{R}^n}Au_1\cdot u_2\,dx,\qquad u_1,u_2\in H^\infty(\mathbb{R}^n,\mathbb{R}^n)\]
    is $L^2$-symmetric and positive definite \cite{bauer2015local}. In the non-metric case (when the connection is not Levi-Civita), it seems we can relax this requirement. The invertibility part of the ellipticity condition is necessary to demonstrate smoothness of the spray (see Proposition \ref{system1}), but we may be able to omit the condition on the norm of $a(\xi)^{-1}$. For more information on these classes of Fourier multipliers, see \cite{escher2014fractional,bauer2015local}. 
\end{remark}

\section{Geometric interpretations of the  \texorpdfstring{$b$}{b}-equation and a Kelvin-Noether circulation theorem} \label{interpretations}

In this section, we begin by describing the notion of a non-metric Euler-Arnold equation, first introduced by Escher and Kolev \cite{escher2011degasperis}, and provide a geometric interpretation of the higher-dimensional $b$-equation as a non-metric Euler-Arnold equation on the diffeomorphism group $\operatorname{Diff}_{H^\infty}(\mathbb{R}^n)$; see Theorem \ref{beqnconnection1}. We then describe the Lie-transport structure of the $b$-equation, and provide a Kelvin-Noether circulation theorem for this transport equation; see Theorem \ref{kelvinnoether}.

\subsection{Non-metric Euler-Arnold equation}
\label{EAsection} 

There is a well-known equivalence between the geodesic equation on a Lie group and a first-order equation on the Lie algebra called the \textit{Euler-Arnold equation}, first introduced by Arnold in \cite{arnold1966} (see also \cite{vizman2008, misiolek2010fredholm}). 
For any smooth Riemannian manifold $(G,g)$, the energy functional of a smooth curve $c:[0,1]\to G$ is defined by 
\begin{equation}\label{energy}
E(c):=\frac{1}{2}\int_0^1g_{c(t)}(\dot{c}(t),\dot{c}(t))\,dt.
\end{equation}
A \textit{metric} geodesic is a critical point of this energy functional. If $G$ is a regular Fr\'{e}chet-Lie group, we can define a (weak) right-invariant metric on $G$ by specifying an inner product $\langle \,,\,\rangle$ on its Lie algebra $\mathfrak{g}$ and extending to all tangent spaces by right translations. That is, for $\xi,\eta\in T_xG$ we define $g_x(\xi,\eta):=\langle\xi x^{-1},\eta x^{-1} \rangle$. This inner product on the Lie algebra is typically prescribed in terms a symmetric linear operator $A:\mathfrak{g}\to\mathfrak{g}^*$ called the inertia operator, which defines an inner product $\langle\,,\, \rangle$ on $\mathfrak{g}$ via 
\begin{equation}\label{rightinvariantmetric}
\langle\xi,\eta \rangle:=(A\xi,\eta)=(A\eta,\xi)\qquad \forall\xi,\eta\in\mathfrak{g},
\end{equation}
where $(\,,\,)$ denotes the natural pairing of the Lie algebra $\mathfrak{g}$ and its dual $\mathfrak{g}^*.$ 
The geodesic equation on the Lie group $G$ is equivalent to an equation on the Lie algebra $\mathfrak{g}$, called the Euler-Arnold equation, in the following sense. 

\begin{theorem}[Euler-Arnold equation \cite{arnold1966}] \label{metricEA}
    Let $G$ be a regular Fr\'{e}chet-Lie group equipped with a (weak) right-invariant metric $g$. A curve $c:[0,1]\to G$ is a geodesic with respect to the metric $g$ if and only if its right logarithmic derivative $u(t)=\dot{c}(t)c(t)^{-1}:[0,1]\to\mathfrak{g}$ satisfies the Euler-Arnold equation 
    \[\dot{u}(t)=-\operatorname{ad}_u^\top u,\]
    where $\operatorname{ad}_u^\top$ is the formal adjoint of the operator $\operatorname{ad}_u:v\to[u,v]$ with respect to the inner product $\langle\,,\,\rangle$ on the Lie algebra $\mathfrak{g}$, i.e.
    \[\langle\operatorname{ad}_u^\top v,w\,\rangle=\langle v,\operatorname{ad}_u w\,\rangle.\]
\end{theorem}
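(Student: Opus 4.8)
The plan is to read ``geodesic'' in the sense fixed earlier in the paper, namely as a critical point of the energy functional \eqref{energy}, so that the statement reduces to a calculus-of-variations computation on the regular Fr\'echet--Lie group $G$. First I would use right-invariance of $g$ to trivialize the energy: writing $u(t)=\dot c(t)c(t)^{-1}$ one has $g_{c(t)}(\dot c(t),\dot c(t))=\langle u(t),u(t)\rangle$, so $E(c)=\tfrac12\int_0^1\langle u(t),u(t)\rangle\,dt$. Given a smooth variation $\varepsilon\mapsto c_\varepsilon$ with fixed endpoints and $c_0=c$, set $u_\varepsilon(t)=\dot c_\varepsilon(t)c_\varepsilon(t)^{-1}$ and let $v(t):=\partial_\varepsilon\big|_{\varepsilon=0}c_\varepsilon(t)\cdot c(t)^{-1}\in\mathfrak g$ be the right-trivialized variation field; since the endpoints are held fixed, $v(0)=v(1)=0$.

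The key kinematic step is the Maurer--Cartan (zero-curvature) identity for right logarithmic derivatives: for the two-parameter family $c_\varepsilon(t)$, with $v_\varepsilon(t):=\partial_\varepsilon c_\varepsilon(t)\cdot c_\varepsilon(t)^{-1}$, one has $\partial_\varepsilon u_\varepsilon-\partial_t v_\varepsilon=-\operatorname{ad}_{u_\varepsilon}v_\varepsilon$; evaluating at $\varepsilon=0$ gives $\delta u:=\partial_\varepsilon\big|_{0}u_\varepsilon=\dot v-\operatorname{ad}_u v$. Differentiating the trivialized energy under the integral sign then yields $\delta E=\int_0^1\langle u,\dot v-\operatorname{ad}_u v\rangle\,dt$. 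Integrating the first term by parts and using $v(0)=v(1)=0$ turns $\int_0^1\langle u,\dot v\rangle\,dt$ into $-\int_0^1\langle\dot u,v\rangle\,dt$, while the definition of the formal adjoint gives $\langle u,\operatorname{ad}_u v\rangle=\langle\operatorname{ad}_u^\top u,v\rangle$. Hence
\[
\delta E=-\int_0^1\big\langle\,\dot u+\operatorname{ad}_u^\top u,\ v\,\big\rangle\,dt.
\]

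From this first-variation formula one direction is immediate: if $u$ solves $\dot u=-\operatorname{ad}_u^\top u$, then $\delta E=0$ for every admissible variation, so $c$ is a geodesic. For the converse, one needs (i) that every smooth $v:[0,1]\to\mathfrak g$ with $v(0)=v(1)=0$ actually arises as the variation field of some fixed-endpoint variation of $c$ --- this is precisely where the regularity hypothesis on the Fr\'echet--Lie group $G$ enters, allowing one to integrate a suitably right-translated $v$ to a genuine curve in $G$ --- and (ii) a fundamental-lemma-of-the-calculus-of-variations argument: if $\langle w(t),v(t)\rangle=0$ for all such $v$, then $w\equiv0$. I expect step (ii) to be the main obstacle, since $g$ is only a \emph{weak} Riemannian metric and $\langle\,,\,\rangle$ need not be strong; one deals with this via the non-degeneracy of the underlying $L^2$-type pairing together with density of compactly supported fields (Proposition \ref{sobolevproperties}(1)), which is the same mechanism that makes the operator $\operatorname{ad}_u^\top$ --- whose existence is already presupposed in the statement --- well defined. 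The remaining points (smoothness of $\varepsilon\mapsto c_\varepsilon$, the interchange of $\partial_\varepsilon$, $\partial_t$ and $\int$, and the verification of the Maurer--Cartan identity) are routine given the Lie-group structure, and the fully rigorous version is the content of the cited reference \cite{vizman2008}.
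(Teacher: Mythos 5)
The paper does not actually prove this theorem---it is imported verbatim from Vizman's cited work---so there is no internal proof to compare against; your variational derivation (trivialize the energy by right-invariance, compute $\delta u=\dot v-\operatorname{ad}_u v$ from the Maurer--Cartan identity, integrate by parts, and invoke regularity of $G$ to realize every endpoint-vanishing $v:[0,1]\to\mathfrak g$ as a variation field) is exactly the standard argument given there, and your signs are consistent with the paper's convention $\operatorname{ad}_u v=-[u,v]$. One simplification: the fundamental-lemma step you flag as the main obstacle needs no density or $L^2$-type nondegeneracy machinery even though $g$ is only weak---taking $v(t)=\chi(t)\bigl(\dot u+\operatorname{ad}_u^\top u\bigr)(t)$ for a bump function $\chi$ supported in $(0,1)$ and using positive-definiteness of the inner product $\langle\,,\,\rangle$ already forces $\dot u+\operatorname{ad}_u^\top u\equiv 0$.
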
 

\begin{proof}
    Let $c:(-\delta,\delta)\times[0,1]\to G$ be a variation of the curve $c$ with fixed endpoints and variational parameter $\varepsilon\in(-\delta,\delta)$. Using right-invariance of the metric $g$, the energy of this variational family of curves is given by
    \[E(c)=\frac{1}{2}\int_0^1 \left
    \langle\dot{c}(\varepsilon,t)c(\varepsilon,t)^{-1},\dot{c}(\varepsilon,t)c(\varepsilon,t)^{-1}\right\rangle\,dt.\]
    Taking the first variation of the energy functional gives
    \[\delta E(c)=\int_0^1 \left\langle u(t),\frac{d}{d\varepsilon}\Big|_{\varepsilon=0}\dot{c}(\varepsilon,t)c(\varepsilon,t)^{-1}\right\rangle\,dt=\int_0^1 \left\langle u(t),\dot{v}(t)-\operatorname{ad}_{u(t)}v(t)\right\rangle\,dt,\]
    where $v(t)=(\partial_\varepsilon c(\varepsilon,t)|_{\varepsilon=0})c(t)^{-1}.$ Now integrate by parts and use the definition of the adjoint transpose to deduce
    \[\delta E(c)=-\int_0^1 \left\langle \dot{u}(t)+\operatorname{ad}^\top_{u(t)}u(t),v(t)\right\rangle\,dt.\]
    As geodesics on $G$ correspond to vanishing variations of $E(c)$, the velocity field $u(t)$ must satisfy
    \[\dot{u}(t)=-\operatorname{ad}^\top_{u(t)}u(t).\]
\end{proof}

\begin{remark}
The right-invariant metric defined by \eqref{rightinvariantmetric} is only a \textit{weak} Riemannian metric. It does not in general induce an isomorphism between a given tangent space $T_gG$ and its dual (as is the case if $G$ is of finite dimension).  Moreover, a covariant derivative that is torsion-free and compatible with the metric may not exist \cite{constantin2002geometric}. 
\end{remark}

Euler-Arnold equations were originally derived with respect to the Levi-Civita connection of a one-sided invariant \textit{Riemannian metric} on a Lie group, but, as pointed out by Escher and Kolev \cite{escher2011degasperis}, the theory also holds in the more general setting of a one-sided invariant \textit{affine connection}, which we now describe. We can define a geodesic $\gamma$ in terms of only a connection $\nabla$, by
\[\nabla_{\dot{\gamma}}\dot{\gamma}=0,\]
without reference to a metric. In this case, one has a result analogous to Theorem \ref{metricEA}, which we present below. First we must introduce some terminology. If $\varphi$ is a diffeomorphism on $G$, we say the affine connection $\nabla$ is \textit{invariant} under $\varphi$ if $\varphi^*(\nabla_XY)=\nabla_{\varphi^*X}\varphi^*Y$, where $\varphi^*X$ is the vector field defined pointwise by
\[(\varphi^*X)(x):=T_{\varphi(x)}\varphi^{-1}X_{\varphi(x)}.\]
An affine connection on $G$ is called \textit{right-invariant} if it is invariant under right translations $h\mapsto hg$ in $G$, and similarly for left translations. There is a canonical connection on any Lie group which is bi-invariant, defined by
\[\nabla^0_{\xi_u}\xi_v=\frac{1}{2}[\xi_u,\xi_v],\]
where $\xi_w$ is the right-invariant vector field on $G$ generated by the vector $w\in\mathfrak{g}$. 
To every right-invariant affine connection $\nabla$ on $G$, we can associate the bilinear form
\[B(X,Y)=\nabla_XY-\nabla^0_XY.\]
Conversely, each bilinear form $B:\mathfrak{g}\times\mathfrak{g}\to\mathfrak{g}$ induces a unique right-invariant affine connection $\nabla$ on $G$ which is given on the Lie algebra $\mathfrak{g}$ by
\begin{equation}\label{rightinvariantconnection}
\nabla_{u}v=\frac{1}{2}[u,v]+B(u,v).
\end{equation}
This extends to a connection on the whole tangent bundle 
\begin{equation}\label{eqn:connection}
\nabla_{\xi_u}\xi_v:=
\xi_{\nabla_{u}v}.
\end{equation}

\begin{example}
Recall that the group of Sobolev diffeomorphisms $\operatorname{Diff}_{H^\infty}(\mathbb{R}^n)$ has as its Lie algebra $H^\infty(\mathbb{R}^n,\mathbb{R}^n).$ For any smooth diffeomorphism $\varphi\in \operatorname{Diff}_{H^\infty}(\mathbb{R}^n)$, the tangent vector $u\in H^\infty(\mathbb{R}^n,\mathbb{R}^n)$ induces a right-invariant vector field, defined pointwise by $(\xi_u)_\varphi:= d(R_\varphi)_{\operatorname{id}}u.$ If $\psi$ is a curve in $\operatorname{Diff}_{H^\infty}(\mathbb{R}^n)$ such that $\psi(0)=\operatorname{id}$ and $\partial_t\psi(0)=u,$ then 
\[
d(R_\varphi)_{\operatorname{id}} u=\frac{d}{dt}\Big|_{t=0} R_\varphi(\psi(t,x))=\frac{d}{dt}\Big|_{t=0} \psi(t,\varphi(x))=\partial_t\psi(0,\varphi(x))=u\circ\varphi,
\]
so that $(\xi_u)_\varphi=u\circ\varphi$. Then for any $v_1,v_2\in T_\varphi\operatorname{Diff}_{H^\infty}(\mathbb{R}^n)$ we have by \eqref{eqn:connection} that
\begin{equation}\label{eqn:connectiondiffeo}
\nabla_{v_1}v_2:=R_\varphi\left(\frac{1}{2}[v_1\circ\varphi^{-1},v_2\circ\varphi^{-1}]+B(v_1\circ\varphi^{-1},v_2\circ\varphi^{-1})\right)
\end{equation}
since $v_1\circ\varphi^{-1},v_2\circ\varphi^{-1}\in H^\infty(\mathbb{R}^n,\mathbb{R}^n)$ generate the right-invariant vector fields $v_1,v_2$ (allowing $\varphi$ to vary).
\end{example}

The velocity field $u(t)$ of geodesics with respect to a right-invariant connection also satisfy an Euler-Arnold equation, analogous to the metric case.

\begin{theorem}[Non-metric Euler-Arnold equation; Escher and Kolev \cite{escher2011degasperis}]\label{connectionEA}
    A smooth curve $c:[0,1]\to G$ is a geodesic for a right-invariant linear connection $\nabla$ defined by \eqref{rightinvariantconnection} if and only if its right logarithmic derivative $u(t)=\dot{c}(t)c(t)^{-1}:[0,1]\to\mathfrak{g}$ satisfies the Euler-Arnold equation
    \begin{equation}
        \partial_tu=-B(u,u).
    \end{equation}
    In particular, every bilinear operator $B$ on the Lie algebra $\mathfrak{g}$ corresponds to the geodesic flow of a right-invariant symmetric linear connection on $G$.
\end{theorem}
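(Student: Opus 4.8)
\textbf{Proof proposal for Theorem \ref{connectionEA}.}

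The plan is to mimic the standard proof of the metric Euler--Arnold equation (Theorem \ref{metricEA}), replacing the Levi--Civita connection by the right-invariant connection $\nabla$ defined by \eqref{rightinvariantconnection}--\eqref{eqn:connection}. Let $c:[0,1]\to G$ be a smooth curve and set $u(t):=\dot c(t)c(t)^{-1}\in\mathfrak g$, so that $\dot c(t)=(\xi_{u(t)})_{c(t)}$, i.e.\ $\dot c$ is the value at $c(t)$ of the right-invariant vector field generated by $u(t)$. The first step is to compute $\nabla_{\dot c}\dot c$ along $c$. Since $\nabla$ is a connection on the tangent bundle and $\dot c(t)=\xi_{u(t)}\circ c(t)$ with $u(t)$ now time-dependent, I would split $\nabla_{\dot c}\dot c$ into two contributions using the Leibniz/linearity properties of a connection: one term coming from the $t$-dependence of the ``coefficient'' $u(t)$, which yields $\xi_{\dot u(t)}$ evaluated at $c(t)$ (this is just the covariant derivative of the vertical part, and because right translation is the chart here it contributes $\partial_t u$ right-translated), and one term coming from differentiating the right-invariant vector field $\xi_{u}$ in its own direction, which by \eqref{eqn:connection} is $\xi_{\nabla_u u}$ evaluated at $c(t)$. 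Concretely, the identity to establish is
\[
\nabla_{\dot c(t)}\dot c(t)=\big(\xi_{\partial_t u(t)}\big)_{c(t)}+\big(\xi_{\nabla_{u(t)}u(t)}\big)_{c(t)}.
\]

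Granting that identity, the geodesic equation $\nabla_{\dot c}\dot c=0$ becomes, after right-translating back to $\mathfrak g$ (i.e.\ applying $d(R_{c(t)^{-1}})$, which is injective), the equation $\partial_t u+\nabla_u u=0$ in the Lie algebra. Now I invoke \eqref{rightinvariantconnection}: $\nabla_u u=\tfrac12[u,u]+B(u,u)=B(u,u)$, since $[u,u]=0$ by antisymmetry of the bracket. Hence $\partial_t u=-B(u,u)$, which is the desired Euler--Arnold equation; running the computation backwards gives the converse. The final sentence of the theorem is then immediate: given any bilinear $B:\mathfrak g\times\mathfrak g\to\mathfrak g$, formula \eqref{rightinvariantconnection} defines a right-invariant connection whose torsion $T(u,v)=\nabla_uv-\nabla_vu-[u,v]=B(u,v)-B(v,u)$; symmetrizing $B$ (replacing it by $(u,v)\mapsto\tfrac12(B(u,v)+B(v,u))$) does not change $B(u,u)$ and produces a torsion-free, i.e.\ symmetric, connection with the same geodesics.

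The main obstacle is making the first step rigorous in the Fr\'echet-Lie group setting: one must justify the decomposition of $\nabla_{\dot c}\dot c$ for a curve whose velocity is a \emph{time-dependent} right-invariant vector field, which requires either working with a local extension of $\dot c$ to an honest vector field on $G$ and checking the answer is independent of the extension, or invoking the covariant-derivative-along-a-curve formalism for the connection $\nabla$ and its compatibility with right translation. Since $\nabla$ is right-invariant by construction, the cleanest route is to pull everything back by $R_{c(t)}$ to the identity and use that $R_{c(t)}^*\nabla=\nabla$; the smoothness/regularity needed to differentiate these expressions in $t$ is exactly what the regular Fr\'echet-Lie group hypothesis on $G$ (and the results of Djebail--Hermas for $\mathrm{Diff}_{H^\infty}(\mathbb R^n)$) provides. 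This is precisely the computation carried out by Escher and Kolev \cite{escher2011degasperis} in the one-dimensional case, so I would follow their argument, checking that no step uses $n=1$.
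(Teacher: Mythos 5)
The paper offers no proof of Theorem \ref{connectionEA}; it is imported verbatim from Escher and Kolev \cite{escher2011degasperis}, so there is nothing in-paper to compare against, and your argument is correct and essentially the one in that reference. The key identity $\nabla_{\dot c}\dot c=\bigl(\xi_{\partial_t u}\bigr)_{c(t)}+\bigl(\xi_{\nabla_{u}u}\bigr)_{c(t)}$ is just the Leibniz rule for covariant differentiation along $c$ applied to $\dot c(t)=(\xi_{u(t)})_{c(t)}$, the bracket term in \eqref{rightinvariantconnection} vanishes on the diagonal, and your symmetrization of $B$ (which leaves $B(u,u)$ unchanged) correctly disposes of the final clause.
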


\begin{remark}
The Arnold bilinear operator $B$ is defined by
\begin{equation}\label{arnoldbilinear}
B(u,v)=\frac{1}{2}\left(\ad_u^\top v+\ad_v^\top u \right).
\end{equation}
With this bilinear form, the Euler-Arnold equation in Theorem \ref{metricEA} can be written in the same form as in Theorem \ref{connectionEA}. Notice that \eqref{arnoldbilinear} is defined in terms of the metric on $G$. Arnold showed that a necessary and sufficient condition for the existence of a covariant derivative compatible with a right-invariant metric on $\operatorname{Diff}_{H^\infty}(\mathbb{R}^n)$ is the existence of the Arnold bilinear form \eqref{arnoldbilinear}. Since the metric on $\operatorname{Diff}_{H^\infty}(\mathbb{R}^n)$ is only a weak Riemannian metric, the existence of such a covariant derivative is not guaranteed a priori. The bilinear form $B$ in the Euler-Arnold equation is given by \eqref{arnoldbilinear} if and only if the corresponding geodesics are critical points of the energy functional \eqref{energy} with respect to a right-invariant metric.
\end{remark}

We can now observe that the higher-dimensional $b$-equation is the Euler-Arnold equation of a linear connection which is given by the construction above.

\begin{theorem}\label{beqnconnection1}
If the inertia operator is invertible, then the b-equation \eqref{eqn:bequationdimn} is the Euler-Arnold equation of the right-invariant symmetric affine connection $\nabla$ on $\operatorname{Diff}_{H^\infty
}(\mathbb{R}^n)$ defined on the Lie algebra $H^\infty(\mathbb{R}^n,\mathbb{R}^n)$ by
\eqref{rightinvariantconnection}
where the bilinear form $B$ reads
\begin{equation*}
B(u,v)=\frac{1}{2}A^{-1}\left(\nabla_u(Av)+\nabla_v(Au)+(\nabla u)^T(Av)+(\nabla v)^T(Au)+(b-1)\left(\operatorname{div}(u)Av+\operatorname{div}(v)Au\right)\right)
\end{equation*}
This connection extends to the entire tangent bundle $T\operatorname{Diff}_{H^\infty
}(\mathbb{R}^n)$ in the usual way, cf. \eqref{eqn:connection}.
\end{theorem}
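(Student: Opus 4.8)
The plan is to reduce everything to Theorem~\ref{connectionEA}. That result tells us that \emph{any} bilinear map $B:\mathfrak g\times\mathfrak g\to\mathfrak g$ determines, through \eqref{rightinvariantconnection}, a right-invariant symmetric affine connection on $\operatorname{Diff}_{H^\infty}(\mathbb R^n)$ whose geodesics have velocity fields satisfying $\partial_t u=-B(u,u)$. So the proof reduces to two things: first, checking that the $B$ displayed in the statement is a genuine (well-defined, symmetric, bilinear) Arnold-type operator on the Lie algebra $H^\infty(\mathbb R^n,\mathbb R^n)$; and second, verifying the algebraic identity that the $b$-equation, once solved for $U_t$, reads $U_t=-B(U,U)$.

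First I would address well-definedness. Symmetry of $B$ is visible --- swapping $u\leftrightarrow v$ permutes the five terms inside the parentheses among themselves --- and bilinearity follows from linearity of $A$ and of $A^{-1}$ together with bilinearity of the three elementary operations $(w,\zeta)\mapsto\nabla_w\zeta$, $(w,\zeta)\mapsto(\nabla w)^T\zeta$, and $(w,\zeta)\mapsto\operatorname{div}(w)\,\zeta$. The only point needing care is that $B(u,v)\in H^\infty$: here one invokes that $A$, being a Fourier multiplier of class $\mathcal E^r$, is bounded $H^\infty\to H^\infty$ and, being elliptic, has inverse $A^{-1}$ that is again a Fourier multiplier bounded $H^\infty\to H^\infty$; and that the differential/multiplicative operations above preserve $H^\infty(\mathbb R^n,\mathbb R^n)$, since pointwise products of $H^\infty$ functions lie in $H^\infty$ by Proposition~\ref{sobolevproperties}(3) (applied at each Sobolev order) and differentiation drops Sobolev regularity by only a finite amount. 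Thus $B$ is a legitimate bilinear operator on $\mathfrak g$, and since it is symmetric the induced connection \eqref{rightinvariantconnection} is torsion-free.

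Next I would carry out the identity. Putting $v=u$ in the displayed $B$ collapses the five terms pairwise and cancels the $\tfrac12$, leaving
\[
B(u,u)=A^{-1}\Bigl(\nabla_u(Au)+(\nabla u)^T(Au)+(b-1)\operatorname{div}(u)\,Au\Bigr).
\]
On the other side, substituting $\Omega=AU$ into \eqref{b-equation} and applying $A^{-1}$ (legitimate by the invertibility hypothesis) gives
\[
U_t=-A^{-1}\Bigl(\nabla_U(AU)+(\nabla U)^T(AU)+(b-1)\operatorname{div}(U)\,AU\Bigr)=-B(U,U),
\]
which is exactly the Euler-Arnold equation of Theorem~\ref{connectionEA} for this $B$. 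Combining the two steps and quoting Theorem~\ref{connectionEA} finishes the proof; extension to all of $T\operatorname{Diff}_{H^\infty}(\mathbb R^n)$ is then automatic via \eqref{eqn:connection}.

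I do not anticipate a serious obstacle: the substance of the theorem is the \emph{recognition} that \eqref{b-equation} has the shape $U_t=-B(U,U)$ with $B$ symmetric bilinear, after which Theorem~\ref{connectionEA} supplies all the geometry. If anything is delicate it is the first step --- making sure $A^{-1}$ and the nonlinear terms keep us inside the Fr\'echet Lie algebra $H^\infty$, so that the abstract theory of right-invariant affine connections genuinely applies --- together with the minor bookkeeping that this $B$, coming from a non-metric setting, need not (and in general will not) coincide with the Arnold form \eqref{arnoldbilinear}, yet still defines a bona fide right-invariant symmetric connection.
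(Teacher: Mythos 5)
Your proposal is correct and follows essentially the same route as the paper: write the $b$-equation as $u_t=-A^{-1}\left(\nabla_u(Au)+(\nabla u)^T(Au)+(b-1)\operatorname{div}(u)Au\right)$, observe that this is $u_t=-B(u,u)$ for the displayed symmetric bilinear $B$, and invoke Theorem \ref{connectionEA}. The only difference is that you spell out the well-definedness of $B$ on $H^\infty(\mathbb{R}^n,\mathbb{R}^n)$, which the paper leaves implicit.
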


\begin{proof}
Let $A$ be an invertible, continuous linear operator on $H^\infty(\mathbb{R}^n,\mathbb{R}^n)$. The $b$-equation \eqref{b-equation} can then be written as
\begin{equation}\label{eqn:bequationdimn}
u_t=-A^{-1}\left(\nabla_u(Au)+(\nabla u)^T(Au)+(b-1)\operatorname{div}(u)Au\right),
\end{equation}
where $u:[0,T)\times\mathbb{R}^n\to\mathbb{R}^n$ is a time-dependent vector field. Define the bilinear form $B:H^\infty(\mathbb{R}^n,\mathbb{R}^n)\times H^\infty(\mathbb{R}^n,\mathbb{R}^n)\to H^\infty(\mathbb{R}^n,\mathbb{R}^n)$ as in the statement of the theorem. Then $B$ induces a unique right-invariant symmetric connection $\nabla$ given by \eqref{rightinvariantconnection}. Observe now that \eqref{eqn:bequationdimn} is of the form
\[u_t=-B(u,u).\]
By Theorem \ref{connectionEA}, it corresponds to the Euler-Arnold equation of the right-invariant symmetric connection $\nabla$ on $\operatorname{Diff}_{H^\infty
}(\mathbb{R}^n)$.
\end{proof}

\begin{remark}
When $b=2$, the connection $\nabla$ defined in Theorem \ref{beqnconnection1} is the Levi-Civita connection; see e.g. \cite{bauer2015local} where EPDiff is derived as an extremum of the energy functional \eqref{energy} with respect to the right-invariant metric induced by a Fourier multiplier $A$. Moreover, in this case, $B$ is Arnold's bilinear form. For $b\neq2$, we have just shown in Theorem \ref{beqnconnection1} that the $b$-equation is a non-metric Euler-Arnold equation, and the connection is not Levi-Civita.
\end{remark}

\subsection{Lie-transport and a Kelvin-Noether circulation theorem}\label{sec:kelvinnoether}
The higher-dimensional $b$-equation describes the motion of shallow water waves in $n$-dimensions, where the constant $b$ can be viewed as a balance parameter between fluid convection, captured by the covariant derivative term, and fluid stretching/expansion, captured by the remaining quadratic terms \cite{holm2003wave}. The parameter $b$ has another important interpretation, as described by Holm and Staley \cite{holm2003wave}, which we now recall. In dimension one, if the function $m^{1/b}$ is well-defined, \eqref{1D} may be written as the conservation law
\[\partial_t(m^{1/b})+\partial_x(m^{1/b}u)=0,\]
implying conservation of the quantity
\[\int_{-\infty}^\infty m^{1/b}(x,t)\,dx=\int_{-\infty}^\infty m^{1/b}(x,0)\,dx.\]
For a Lagrangian flow $x(X,t)$ satisfying $dx=(m^{1/b})^{-1}\,dX+u\,dt$, this implies
\begin{equation}\label{conserved}
m^{1/b}(x,t)dx=m^{1/b}(X,0)dX
\end{equation}
since $x(X,0)=X.$ The tensor product of each side of this equation $b$ times then yields
\[m(x,t)dx^{\otimes b}=m(X,0)dX^{\otimes b},\] which one can differentiate in time at a constant Lagrangian coordinate $X$ to find
\begin{equation}\label{lietransport}
\frac{d}{dt}\Big|_{X}\left(m(x,t)dx^{\otimes b}\right)=(m_t+um_x+bu_xm)\,dx^{\otimes b}=0.
\end{equation}
Here, to differentiate the coordinate one-forms $dx$, we have used the fact that $dx/dt|_{X}=u(x,t)$, which follows at once from the defining relation $dx=(m^{1/b})^{-1}\,dX+u\,dt$. The $b$-equation \eqref{1D} (with any inertia operator) therefore expresses the invariance of the Lie-transport of the one-form density $m(x,t)dx$ in $b$-dimensions \cite{holm2003wave}. When $b$ is a negative integer, \eqref{1D} can be interpreted as
\[\frac{d}{dx}\Big|_{X}\left(m(x,t)(\partial_x)^{\otimes -b}\right)=0.\]
If $b=-1$, for example, this gives
\[\frac{d}{dx}\Big|_{X}(m\partial_x)=(m_t+um_x-u_xm)\partial_x=0,\]
where $(um_x-u_xm)\partial_x=[u\partial_x,m\partial_x].$ This interpretation can be lifted to the higher dimensional $b$-equation by regarding the momentum $\Omega:\mathbb{R}^n\times [0,T)\to\mathbb{R}^n$ as the vector coefficient of an invariant one-form density $\Omega_i(x,t)dx^i$ defined by
\[\Omega(x,t)\cdot dx\otimes (dV)^{\otimes (b-1)}=\Omega(X,0)\cdot dX\otimes (d\tilde{V})^{\otimes (b-1)},\]
where $dV=dx^1\wedge\dots\wedge dx^n$ and $d\tilde{V}=dX^1\wedge\dots\wedge dX^n$ are the Eulerian and Lagrangian volume forms, respectively \cite{holm2003wave}. Differentiating this equation in time at a constant Lagrangian coordinate $X$ yields 
\begin{equation}\label{lietransport1}
\begin{split}
    \frac{d}{dt}\Big|_{X}\left(\Omega(x,t)\cdot dx\otimes (dV)^{\otimes (b-1)}\right)=&\frac{d\Omega}{dt}\Big|_{X}\cdot dx\otimes (dV)^{\otimes (b-1)}\\
    &+\Omega(x,t)\cdot du\otimes (dV)^{\otimes (b-1)}\\
    &\qquad+\Omega(x,t)\cdot dx\otimes \frac{d}{dt}\Big|_{X}\left((dV)^{\otimes (b-1)}\right)=0,
    \end{split}
    \end{equation} 
where we have again used $dx/dt|_{X}=u$ to differentiate the line element $dx=\langle dx^1,\dots,dx^n \rangle$, noting that $du=(\partial_iu)dx^i.$ Using these same formulae, one readily finds the Lagrangian time derivative of the volume form to be
\begin{align*}
\frac{d}{dt}\Big|_{X}(dV)=(\operatorname{div}u)dV.
\end{align*}
Lastly, since $x=x(X,t)$ with $X$ constant, we have
\[\frac{d}{dt}\Big|_{X}\Omega(x,t)=\frac{\partial\Omega}{\partial t}+(\nabla \Omega)^T\frac{dx}{dt}\Big|_{X}=\frac{\partial\Omega}{\partial t}+(\nabla\Omega)^Tu, \]
which is precisely is the material derivative. Putting all of this together and collecting coefficients in \eqref{lietransport1} gives exactly the higher-dimensional $b$-equation \eqref{b-equation}. From this interpretation, one can see the sources of the convection, stretching, and expansion terms. The convection term arises from the material derivative of the coefficient function $\Omega$, the stretching term arises from differentiating the line element, while the expansion term arises from differentiating the volume element.

We now present a proposition that highlights the transport structure of the $b$-equation.

\begin{proposition}[Lie-transport]\label{prop:lietransport}
    For $b\in\mathbb{Z}$, the higher-dimensional $b$-equation \eqref{b-equation} can be written in the form 
    \begin{equation}
    \left(\frac{\partial}{\partial t}+\mathcal{L}_u\right)\left(\Omega\cdot dx\otimes dV^{\otimes (b-1)}\right)=0.
\end{equation}
\end{proposition}

\begin{proof}\label{lietransportvector}
Using Cartan's identity $\mathcal{L}_U\alpha=\iota_U(d\alpha)+d(\iota_U\alpha)$, one readily finds the Lie derivative of the momentum one-form $\Omega\cdot\,dx=\Omega_j\,dx^j$ to be
\[\mathcal{L}_U(\Omega\cdot\,dx)=\left(U^k\frac{\partial\Omega_j}{\partial x^k}+\Omega_k\frac{\partial U^k}{\partial x^j}\right)dx^j=\left(\nabla_U\Omega+(\nabla U)^T\Omega\right)\cdot\,dx,\]
with the usual implied summation over repeated indices. By the product rule and the definition of divergence, we have
\[\mathcal{L}_U\left(dV^{\otimes (b-1)}\right)=(b-1)\operatorname{div}(u)\,dV^{\otimes(b-1)}.\]
Putting all of this together yields
\[\left(\frac{\partial\Omega}{\partial t}+\nabla_U\Omega+(\nabla U)^T\Omega+(b-1)\operatorname{div}(u)\Omega\right)\cdot dx\otimes dV^{\otimes(b-1)}=0,\]
which is precisely the higher-dimensional $b$-equation.
\end{proof}

 The previous proposition shows that the higher-dimensional $b$-equation expresses the invariance of the Lie-transport of the momentum one-form density $\Omega\cdot dx\otimes dV^{\otimes (b-1)}.$ This transport equation also satisfies a Kelvin-Noether theorem, akin to Kelvin's circulation theorem for perfect fluids, which states that the circulation of the velocity field along a material moving contour $C(t)$ is conserved in time. An analogous result for the $b$-equation is given in the following theorem. 

 \begin{theorem}[Kelvin-Noether]\label{kelvinnoether}
     Let $U(x,t)$ be the velocity solution to the higher-dimensional $b$-equation with corresponding momentum  $\Omega(x,t)$. If $C(u)$ is a material loop in $(dV)^{\otimes b-1}$ that is advected by the flow, then 
     \[\frac{d}{dt}\oint_{C(u)}\frac{1}{\rho^{b-1}}(\Omega\cdot\,dx)=0,\]
     where $\rho(x,t)$ is the density of the fluid. 
 \end{theorem}

 \begin{proof}
     By Proposition \ref{prop:lietransport}, the $b$-equation is a momentum transport equation. The flow therefore conserves mass, but not volume, and we can define the mass density $\rho$ by $dV_0=\rho(x,t)\,dV$, where $dV_0$ is the volume element at time $t=0$. This implies that the mass is Lie-transported by the velocity $u$:
     \[\left(\frac{\partial}{\partial t}+\mathcal{L}_u\right)(\rho(x,t)\,dV)=0.\]
    From this in conjunction with Proposition \ref{prop:lietransport}, it follows that the $b$-equation is equivalent to
     \begin{align*}
    \left(\left(\frac{\partial}{\partial t}+\mathcal{L}_u\right)\frac{1}{\rho^{b-1}}(\Omega\cdot dx)\right)\otimes (\rho\,dV)^{\otimes (b-1)}=0.
    \end{align*}
     Hence if $C(u)$ is advected by the flow, it holds that
     \begin{align*}
         \frac{d}{dt}\oint_{C(u)}\frac{1}{\rho^{b-1}}(\Omega\cdot\,dx)=\oint_{C(u)}\left(\frac{\partial}{\partial t}+\mathcal{L}_u\right)\left(\frac{1}{\rho^{b-1}}(\Omega\cdot\,dx)\right)=0.
     \end{align*}
 \end{proof}

\section{Local well-posedness in \texorpdfstring{$H^s(\mathbb{R}^n,\mathbb{R}^n)$}{Hs} and \texorpdfstring{$H^\infty(\mathbb{R}^n,\mathbb{R}^n)$}{Hinf}}\label{wellposednesssection}

We will now use Lagrangian coordinates to rewrite the $b$-equation \eqref{eqn:bequationdimn} as an ODE on the tangent bundle $T\mathcal{D}^s(\mathbb{R}^n)$ for $s>n/2+1$ with $s\geq r$, where $r$ is the order of the Fourier multiplier $A$. By showing that the geodesic spray is smooth, i.e. the right side of the ODE is smooth, we can apply Picard-Lindel\"{o}f to obtain local well-posedness of the geodesic flow on $T\mathcal{D}^s(\mathbb{R}^n)$ since it is a Banach manifold. This immediately yields local well-posedness in the corresponding Sobolev space, namely $H^s(\mathbb{R}^n,\mathbb{R}^n)$ when $s<n/2+1.$ Remarkably, there is no loss of spatial regularity during the time evolution of the $b$-equation, which allows us to transfer this result to the smooth category, obtaining local well-posedness in $H^\infty(\mathbb{R}^n,\mathbb{R}^n)$.

\begin{proposition}\label{system1}
Let $J\subset\mathbb{R}$ be an open interval containing $0$. Let $\varphi$ be the Lagrangian flow of the $H^s$ time-dependent vector field $u:J\times\mathbb{R}^n\to\mathbb{R}^n$ and define $v:=u\circ\varphi$. Then $u$ is a solution to the $b$-equation \eqref{eqn:bequationdimn} with $A$ an invertible linear operator if and only if $(\varphi,v)$ is a solution of 
\begin{equation}\label{cauchyprob3}
\begin{cases}
\varphi_t=v\\
v_t=S_\varphi(v)
\end{cases}
\end{equation}
where
\[S_\varphi=R_\varphi\circ S\circ R_{\varphi^{-1}}\quad\text{and}\quad S(u)=A^{-1}\left([A,\nabla_u]u-(\nabla u)^T(Au)-(b-1)\operatorname{div}(u)Au\right)\]
\end{proposition}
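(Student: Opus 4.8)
The plan is to reduce the claimed equivalence to a single pointwise operator identity by pulling everything back along the flow. Since $\varphi$ is the Lagrangian flow of $u$ we have $\varphi_t = u\circ\varphi$, and because $v := u\circ\varphi$ this says exactly that the first equation $\varphi_t = v$ in \eqref{cauchyprob3} holds automatically; so the content is entirely in the second equation. First I would differentiate $v = u\circ\varphi$ in time, using the chain rule (legitimate because $\varphi(t)\in\mathcal{D}^s(\mathbb{R}^n)$ with $s>n/2+1$, so composition with $H^s$ maps is well-behaved) together with $\varphi_t = u\circ\varphi$, to obtain
\[
v_t = (u_t)\circ\varphi + \big((\nabla u)\,u\big)\circ\varphi = \big(u_t + \nabla_u u\big)\circ\varphi .
\]
On the other hand $v\circ\varphi^{-1} = u$, so by definition $S_\varphi(v) = R_\varphi\big(S(R_{\varphi^{-1}}v)\big) = S(u)\circ\varphi$. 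Since $\varphi$ is a diffeomorphism, $v_t = S_\varphi(v)$ holds if and only if $u_t + \nabla_u u = S(u)$. Thus the whole proposition collapses to checking that the right-hand side of this last equation agrees with the right-hand side of \eqref{eqn:bequationdimn}, i.e. that
\[
S(u) - \nabla_u u = -A^{-1}\big(\nabla_u(Au) + (\nabla u)^T(Au) + (b-1)\operatorname{div}(u)Au\big).
\]

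To verify this I would simply expand the commutator appearing in the definition of $S$. Writing $[A,\nabla_u]u = A(\nabla_u u) - \nabla_u(Au)$ and applying $A^{-1}$ gives $A^{-1}[A,\nabla_u]u = \nabla_u u - A^{-1}\nabla_u(Au)$. Substituting this into $S(u) = A^{-1}\big([A,\nabla_u]u - (\nabla u)^T(Au) - (b-1)\operatorname{div}(u)Au\big)$ and subtracting $\nabla_u u$, the two copies of $\nabla_u u$ cancel and what remains is precisely $-A^{-1}\big(\nabla_u(Au) + (\nabla u)^T(Au) + (b-1)\operatorname{div}(u)Au\big)$. Both implications of the stated equivalence follow at once from this identity, since each step above was reversible; invertibility of $A$ is what makes $S$ (and hence the right-hand side of \eqref{eqn:bequationdimn}) well-defined in the first place.

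I do not expect a substantive obstacle here: the argument is a one-line chain rule followed by a commutator identity. The only care required is function-space bookkeeping — one should note that each term of $S(u)$ lands in the right Sobolev class so that $v_t = S_\varphi(v)$ is a meaningful equation on $T\mathcal{D}^s(\mathbb{R}^n)$, using that $A$ maps $H^s \to H^{s-r}$, that $A^{-1}$ maps $H^{s-r}\to H^s$ by ellipticity, that the commutator term $[A,\nabla_u]u$ is one order better than $\nabla_u(Au)$ individually, and that the pointwise products are handled by Proposition \ref{sobolevproperties}(3) under the hypothesis $s\ge r$ (so $s-r\ge 0$). The rewriting of $\nabla_u(Au)$ via the commutator $[A,\nabla_u]u$ is cosmetic for the present equivalence, but it is exactly the form that will make the smoothness of $S_\varphi$ (the geodesic spray) accessible in the next step, since $\varphi\mapsto[A,\nabla_u]_\varphi$ inherits good mapping properties from the order-$(r-1)$ nature of the commutator.
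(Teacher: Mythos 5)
Your proposal is correct and follows essentially the same route as the paper: differentiate $v=u\circ\varphi$ to get $v_t=(u_t+\nabla_u u)\circ\varphi$, use $u=R_{\varphi^{-1}}v$ to conjugate, and absorb the extra $\nabla_u u$ into the commutator $[A,\nabla_u]u$. Your version merely makes the ``if and only if'' slightly more explicit by isolating the pointwise identity $S(u)-\nabla_u u = -A^{-1}\left(\nabla_u(Au)+(\nabla u)^T(Au)+(b-1)\operatorname{div}(u)Au\right)$, which is the same computation the paper performs in the forward direction.
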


\begin{proof}
If $\varphi$ is the Lagrangian flow of $u$, then $\varphi_t=u\circ\varphi=:v$. Hence, using equation \eqref{eqn:bequationdimn}, we have
\begin{align*}
v_t&=(u_t+\nabla_uu)\circ\varphi\\
&=A^{-1}\left(A(\nabla_uu)-\nabla_u(Au)-(\nabla u)^T(Au)-(b-1)\operatorname{div}(u)Au\right)\circ\varphi\\
&=A^{-1}\left([A,\nabla_u]u-(\nabla u)^T(Au)-(b-1)\operatorname{div}(u)Au\right)\circ\varphi\\
&=R_\varphi(A^{-1}\left([A,\nabla_u]u-(\nabla u)^T(Au)-(b-1)\operatorname{div}(u)Au\right).
\end{align*}
Since $u=v\circ\varphi^{-1}=R_{\varphi^{-1}}(v)$, the result follows.
\end{proof}

One can now show that the vector field $F:\mathcal{D}^s(\mathbb{R}^n)\times H^s(\mathbb{R}^n,\mathbb{R}^n)\to H^s(\mathbb{R}^n,\mathbb{R}^n)\times H^s(\mathbb{R}^n,\mathbb{R}^n)$ defined by $(\varphi,v)\mapsto (v,S_\varphi(v))$
is smooth in precisely the same way as Bauer, Escher, and Kolev did for the EPDiff equation in \cite{bauer2015local}. We emphasize that composition and inversion are not smooth on $\mathcal{D}^s(\mathbb{R}^n)$, as it is not a Lie group, so that smoothness of $F$ does not follow immediately from the smoothness of $S.$  Note that $F$ is essentially the coordinate expression for the geodesic spray. The tangent bundle is trivial $T\mathcal{D}^s(\mathbb{R}^n)\cong \mathcal{D}^s(\mathbb{R}^n)\times H^s(\mathbb{R}^n,\mathbb{R}^n)$, and hence $F$ is a second-order vector field on $T\mathcal{D}^s(\mathbb{R}^n)$. To demonstrate smoothness of the spray, we follow the presentation in \cite{bauer2015local}. 

\begin{proposition}\label{thm:smoothspray}
Let $A$ be a Fourier multiplier in the class $\mathcal{E}^r$ for $r\geq1$. Let $s>n/2+1$ with $s\geq r$. Then the geodesic spray $(\varphi,v)\mapsto S_\varphi(v)$ where
\[S_\varphi=R_\varphi\circ S\circ R_{\varphi^{-1}}\quad\text{and}\quad S(u)=A^{-1}\left([A,\nabla_u]u-(\nabla u)^T(Au)-(b-1)\operatorname{div}(u)Au\right)\]
extends smoothly from $T\operatorname{Diff}_{H^\infty}(\mathbb{R}^n)$ to $T\mathcal{D}^s(\mathbb{R}^n).$
\end{proposition}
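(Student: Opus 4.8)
The plan is to decompose the spray $S_\varphi(v)=R_\varphi\circ S\circ R_{\varphi^{-1}}(v)$ into pieces each of which is a composition of maps whose smoothness is known, exactly mirroring the strategy of Bauer, Escher, and Kolev \cite{bauer2015local} for EPDiff but carrying along the two extra terms $(\nabla u)^T(Au)$ and $(b-1)\operatorname{div}(u)Au$. The key observation is that the only genuinely dangerous object is the inertia operator $A$ (and $A^{-1}$) conjugated by $R_\varphi$, because composition and inversion on $\mathcal{D}^s(\mathbb{R}^n)$ are not smooth; everything else is built from pointwise multiplication, differentiation, and right translation, which lose a fixed amount of regularity in a controlled way. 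So first I would recall from the remark following the definition of $\mathcal{E}^r$ that, for $A\in\mathcal{E}^r$ with $r\geq1$ and $s>n/2+1$, $s-r\geq0$, the twisted operator
\[
\varphi\mapsto A_\varphi:=R_\varphi\circ A\circ R_{\varphi^{-1}},\qquad \mathcal{D}^s(\mathbb{R}^n)\to\mathcal{L}\bigl(H^s(\mathbb{R}^n,\mathbb{R}^n),H^{s-r}(\mathbb{R}^n,\mathbb{R}^n)\bigr)
\]
is smooth, and that by ellipticity the same holds for $A^{-1}$, giving a smooth map $\varphi\mapsto (A^{-1})_\varphi\in\mathcal{L}(H^{s-r},H^s)$ (this is where invertibility of $A$ is used; it is also what forces the hypothesis $s\geq r$, since we need $H^{s-r}$ to be a genuine Sobolev space on which multiplication estimates apply).

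Next I would rewrite $S_\varphi(v)$ by inserting $R_\varphi R_{\varphi^{-1}}=\id$ between every factor, so that the operator $A$ and $A^{-1}$ always appear in their twisted forms $A_\varphi,(A^{-1})_\varphi$, and the differential operators $\nabla_{(\cdot)}$, $(\nabla\,\cdot\,)^T$, $\operatorname{div}$ get conjugated into $\varphi$-dependent differential operators with coefficients built from $d\varphi$ and $(d\varphi)^{-1}$. Concretely, with $v=u\circ\varphi$ one has $(\nabla u)\circ\varphi = (dv)(d\varphi)^{-1}$, so $\bigl((\nabla u)^T(Au)\bigr)\circ\varphi = \bigl((d\varphi)^{-1}\bigr)^T (dv)^T \bigl(A_\varphi v\bigr)$ and $\bigl(\operatorname{div}(u)\bigr)\circ\varphi=\operatorname{tr}\!\bigl((dv)(d\varphi)^{-1}\bigr)$, while the commutator term $[A,\nabla_u]u$ twists to $A_\varphi(\nabla_u u)\circ\varphi - (\nabla_u(Au))\circ\varphi$ with $(\nabla_u u)\circ\varphi = (dv)(d\varphi)^{-1}v$ and $(\nabla_u(Au))\circ\varphi = (d(A_\varphi v))(d\varphi)^{-1}v$. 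The point of this bookkeeping is that $S_\varphi(v)$ is now expressed as a finite sum of products of: (i) the smooth maps $\varphi\mapsto A_\varphi(v)$ and $\varphi\mapsto(A^{-1})_\varphi(w)$ (linear in the second slot); (ii) the smooth maps $\varphi\mapsto d\varphi$ and $\varphi\mapsto (d\varphi)^{-1}$ from $\mathcal{D}^s(\mathbb{R}^n)$ into $H^{s-1}$, the latter smooth because $\det d\varphi$ is bounded below and matrix inversion is a smooth map of a Banach algebra; (iii) the linear differential operator $d$; and (iv) the vector $v$ itself. Each factor lies in some $H^{s-r}$ or $H^{s-1}$ with the index $\geq 0$, and by Proposition \ref{sobolevproperties}(3) pointwise multiplication $H^s\times H^q\to H^q$ is bounded bilinear (hence smooth) for $0\leq q\leq s$ when $s>n/2$; since $r\geq1$ we have $s-r\leq s-1<s$ and all intermediate regularities stay in the allowed range, so the full composite lands back in $H^s$ and is smooth as a composition and product of smooth maps. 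Finally, applying the outer $(A^{-1})_\varphi\in\mathcal{L}(H^{s-r},H^s)$ recovers a map into $H^s$, and restricting the resulting smooth map on $\mathcal{D}^s(\mathbb{R}^n)\times H^s$ to $\operatorname{Diff}_{H^\infty}(\mathbb{R}^n)\times H^\infty$ gives back the original spray, establishing that it extends smoothly.

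The main obstacle, and the only step that is not a routine multiplication-estimate bookkeeping exercise, is item (i): the smoothness of $\varphi\mapsto A_\varphi$ and of $\varphi\mapsto (A^{-1})_\varphi$ as maps into the appropriate operator spaces. This is precisely the content of the cited result of Bauer, Escher, and Kolev \cite{bauer2015local} (stated in the remark after the definition of $\mathcal{E}^r$), whose proof requires the full strength of the symbol estimates defining class $\mathcal{S}^r$ with $r\geq1$ — one writes the Fr\'echet differentials of $A_\varphi$ in terms of the symbol $a(\xi)$ and its derivatives, and controls them using the decay $\|\partial^\alpha a(\xi)\|\lesssim(1+|\xi|^2)^{(r-|\alpha|)/2}$ together with Moser-type estimates in $H^s$. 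Since this is quoted as a known fact, the remaining work for us is genuinely just the algebraic twisting above plus verifying that the added $b$-terms introduce no new regularity demands beyond what the $b=2$ case already needs; I would check that the worst new term, $(d\varphi^{-1})^T(dv)^T(A_\varphi v)$, involves $A_\varphi v\in H^{s-r}$, $dv\in H^{s-1}$, and $(d\varphi)^{-1}\in H^{s-1}$, all multiplied together landing in $H^{s-r}$ (using $s-r\leq\min(s-1,s-1)$ and part (3) of Proposition \ref{sobolevproperties}), which is then mapped by $(A^{-1})_\varphi$ into $H^s$ — identical in structure to the terms already handled in \cite{bauer2015local}.
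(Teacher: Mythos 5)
Your handling of the terms $(\nabla u)^T(Au)$ and $\operatorname{div}(u)Au$ matches the paper's argument, and your conclusion about $(A^{-1})_\varphi$ is the right one (though the paper justifies it more cleanly than ``by ellipticity'': it uses that $\varphi\mapsto A_\varphi$ lands in the open set $\operatorname{Isom}(H^s,H^{s-r})$, on which inversion is smooth, rather than applying the twisting theorem to $A^{-1}$, a multiplier of order $-r$, which the quoted $r\geq 1$ result does not cover). The genuine gap is in the commutator term. You propose to split $[A,\nabla_u]u$ into $A(\nabla_u u)-\nabla_u(Au)$ and to treat each twisted half as a product of factors ``each lying in some $H^{s-r}$ or $H^{s-1}$ with index $\geq 0$.'' That claim fails for the second half: $A_\varphi v\in H^{s-r}$, so $d(A_\varphi v)\in H^{s-r-1}$, and the product $(d(A_\varphi v))(d\varphi)^{-1}v$ lands at best in $H^{s-r-1}$, which is negative when $s=r$ and in any case is not in the domain $H^{s-r}$ of $(A^{-1})_\varphi$; you would end up in $H^{s-1}$ rather than $H^s$. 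The first half is equally problematic: you must apply $A_\varphi$ to $(dv)(d\varphi)^{-1}v\in H^{s-1}$, but the quoted smoothness of $\varphi\mapsto A_\varphi$ is only as a map into $\mathcal{L}(H^s,H^{s-r})$. The entire reason the equation is written with the commutator $[A,\nabla_u]$ is that it is an operator of order $r$ --- one less than either of its two halves --- and this cancellation is precisely what your term-by-term bookkeeping discards.

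The paper closes this gap without proving any commutator estimate: by Lemma 4.10 of \cite{bauer2015local}, the twisted commutator satisfies $P_\varphi(v):=([A,\nabla_u]u)\circ\varphi=-\partial_\varphi A_\varphi(v,v)$, i.e.\ it is (minus) the first Fr\'echet differential in $\varphi$ of the twisted operator $A_\varphi$. Since $\varphi\mapsto A_\varphi$ is smooth into $\mathcal{L}(H^s,H^{s-r})$, so is its derivative, and hence $(\varphi,v)\mapsto P_\varphi(v)$ is smooth into $H^{s-r}$ with no extra loss of a derivative. To repair your argument you must either invoke this identity or prove directly, from the symbol estimates defining the class $\mathcal{S}^r$, that $[A,\nabla_u]$ is of order $r$ and that its twist by $\varphi$ depends smoothly on $(\varphi,v)$; the elementary product decomposition alone does not suffice.
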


\begin{proof}
Define $P(u):=[A,\nabla_u]u$, $Q(u):=(\nabla u)^T(Au)$, and $R(u):=\operatorname{div}(u)Au$ so that
\[S_{\varphi}(v)=A^{-1}_{\varphi}\left(P_{\varphi}(v)-Q_{\varphi}(v)-(b-1)R_{\varphi}(v)\right).\]
The proof therefore reduces to establishing that the four mappings
\[(\varphi,v)\mapsto P_{\varphi}(v),\qquad (\varphi,v)\mapsto Q_{\varphi}(v),\qquad (\varphi,v)\mapsto R_{\varphi}(v),\qquad (\varphi,w)\mapsto A^{-1}_{\varphi}(w)\]
are smooth. 
\begin{enumerate}
\item  By Lemma 4.10 of \cite{bauer2015local}, the first Frech\'{e}t differential of $A_\varphi$ is given by 
\[\partial_\varphi A_\varphi(v,v)=A_{1,\varphi}(v,v)=-P_\varphi(v),\]
and hence 
\[(\varphi,v)\mapsto P_\varphi(v),\qquad \mathcal{D}^s(\mathbb{R}^n)\times H^s(\mathbb{R}^n,\mathbb{R}^n)\to H^{s-r}(\mathbb{R}^n,\mathbb{R}^n)\]
is smooth because $A_\varphi:=R_\varphi\circ A\circ R_{\varphi^{-1}}$ is smooth for Fourier multipliers $A$ of class $\mathcal{E}^r.$
\item We have $Q_\varphi(v)=R_\varphi\left((\nabla(v\circ\varphi^{-1}))^T\right)A_\varphi(v).$ In a local chart, one finds
\[\left((\nabla(v\circ\varphi^{-1}))^T\circ\varphi\right)^i_k(x)=\delta^{ij}\delta_{kl}[(d\varphi(x))^{-1}]^m_j\partial_mv^l(x),\]
where $(d\varphi(x))^{-1}$ is the inverse of the Jacobian $d\varphi(x)$. It therefore consists of products of these matrix entries with the partial derivatives $\partial_p v^p$. Hence,
\[(\varphi,v)\mapsto (\nabla(v\circ\varphi^{-1}))^T\circ\varphi \]
\[\mathcal{D}^s(\mathbb{R}^n)\times H^s(\mathbb{R}^n,\mathbb{R}^n)\to \mathcal{L}(H^{s-1}(\mathbb{R}^n,\mathbb{R}^n),H^{s-1}(\mathbb{R}^n,\mathbb{R}^n))\]
is smooth since multiplication in $H^{s-1}(\mathbb{R}^n,\mathbb{R}^n)$ is smooth when $s>n/2+1.$ Now since pointwise multiplication extends to a bounded bilinear map
\[H^{s-1}(\mathbb{R}^n,\mathbb{R}^n)\times H^{s-r}(\mathbb{R}^n,\mathbb{R}^n)\to H^{s-r}(\mathbb{R}^n,\mathbb{R}^n)\]
for $s>n/2+1$ and $0\leq s-r\leq s-1$ (by Proposition \ref{sobolevproperties}), and $(\varphi,v)\mapsto A_\varphi(v)$ is smooth by hypothesis, we conclude that \[(\varphi,v)\mapsto Q_\varphi(v),\qquad \mathcal{D}^s(\mathbb{R}^n)\times H^s(\mathbb{R}^n,\mathbb{R}^n)\to H^{s-r}(\mathbb{R}^n,\mathbb{R}^n)\]
is smooth. 
\item We have $R_\varphi(v)=(\operatorname{div}(v\circ\varphi^{-1})\circ\varphi)A_\varphi(v).$ Just note that
\[\operatorname{div}(v\circ\varphi^{-1})\circ\varphi(x)=[(d\varphi)^{-1}]^j_i\partial_jv^i(x),\]
and so we may conclude just as in part 2. that
\[(\varphi,v)\mapsto\operatorname{div}(v\circ\varphi^{-1})\circ\varphi,\qquad \mathcal{D}^s(\mathbb{R}^n)\times H^s(\mathbb{R}^n,\mathbb{R}^n)\to H^{s-1}(\mathbb{R}^n,\mathbb{R})\]
is smooth and hence so too is
\[(\varphi,v)\mapsto R_\varphi(v),\qquad \mathcal{D}^s(\mathbb{R}^n)\times H^s(\mathbb{R}^n,\mathbb{R}^n)\to H^{s-r}(\mathbb{R}^n,\mathbb{R}^n).\]
\item The set of isomorphisms $\operatorname{Isom}(H^s(\mathbb{R}^n,\mathbb{R}^n),H^{s-r}(\mathbb{R}^n,\mathbb{R}^n))$ is open in \[\mathcal{L}(H^s(\mathbb{R}^n,\mathbb{R}^n),H^{s-r}(\mathbb{R}^n,\mathbb{R}^n)).\]

Moreover, the inversion map $P\mapsto P^{-1}$ from
\[\operatorname{Isom}(H^s(\mathbb{R}^n,\mathbb{R}^n),H^{s-r}(\mathbb{R}^n,\mathbb{R}^n))\to \mathcal{L}(H^{s-r}(\mathbb{R}^n,\mathbb{R}^n),H^s(\mathbb{R}^n,\mathbb{R}^n))\]
is smooth. Also, $A_\varphi\in\operatorname{Isom}(H^s(\mathbb{R}^n,\mathbb{R}^n),H^{s-r}(\mathbb{R}^n,\mathbb{R}^n))$ for every $\varphi\in\mathcal{D}^s(\mathbb{R}^n)$ and the map $\varphi\mapsto A_\varphi$ from
\[D^s(\mathbb{R}^n)\to \operatorname{Isom}(H^s(\mathbb{R}^n,\mathbb{R}^n),H^{s-r}(\mathbb{R}^n,\mathbb{R}^n)) \]
is smooth by hypothesis. Hence, $(\varphi,w)\mapsto A^{-1}_\varphi(w)$
from $H^{s-r}(\mathbb{R}^n,\mathbb{R}^n)\to H^s(\mathbb{R}^n,\mathbb{R}^n)$
is smooth.
\end{enumerate}
Since each of the individual maps is smooth, their composition in the form of $S_\varphi(v)$ is smooth.
\end{proof}

\begin{remark}
    The assumption on the order of the Fourier multiplier is sharp, as pointed out in \cite{bauer2015local}. The reason is simple: the term $\operatorname{div}(u)$ is of order one, and it has no hope of becoming smooth unless the operator $A^{-1}$ is a smoothing operator of order at least one.
\end{remark}

Due to the smoothness of the spray $(\varphi,v)\mapsto S_\varphi(v)$, we obtain local existence of geodesics on the Banach manifold $T\mathcal{D}^s(\mathbb{R}^n)$ via Picard-Lindel\"{o}f.

\begin{theorem}
Let $A$ be a Fourier multiplier in the class $\mathcal{E}^r$ for $r\geq 1$. Let $s>n/2+1$ with $s\geq r$. Define 
\[S_\varphi=R_\varphi\circ S\circ R_{\varphi^{-1}}\quad\text{and}\quad S(u)=A^{-1}\left([A,\nabla_u]u-(\nabla u)^T(Au)-(b-1)\operatorname{div}(u)Au\right)\]
as before. For each $(\varphi_0,v_0)\in T\mathcal{D}^s(\mathbb{R}^n)=\mathcal{D}^s(\mathbb{R}^n)\times H^s(\mathbb{R}^n,\mathbb{R}^n)$, there exists a unique non-extendable solution
\[(\varphi,v)\in C^\infty(J_s(\varphi_0,v_0),T\mathcal{D}^s(\mathbb{R}^n))\]
of the Cauchy problem consisting of system \eqref{cauchyprob3} with initial data $\varphi_0=\varphi(0), v_0=v(0),$ where $J_s(\varphi_0,v_0)\subset\mathbb{R}$ is an open interval containing 0.
\end{theorem}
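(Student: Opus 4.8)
The plan is to recognize system \eqref{cauchyprob3} as the flow equation of a vector field on the Banach manifold $T\mathcal{D}^s(\mathbb{R}^n)$ and then invoke the Picard--Lindel\"{o}f theorem for ordinary differential equations on Banach manifolds. Recall from Section \ref{sobolevdiffeos} that $\mathcal{D}^s(\mathbb{R}^n)$ is an open subset of the Hilbert space $H^s(\mathbb{R}^n,\mathbb{R}^n)$, so that $T\mathcal{D}^s(\mathbb{R}^n)\cong\mathcal{D}^s(\mathbb{R}^n)\times H^s(\mathbb{R}^n,\mathbb{R}^n)$ is an open subset of the Hilbert space $H^s(\mathbb{R}^n,\mathbb{R}^n)\times H^s(\mathbb{R}^n,\mathbb{R}^n)$, and in particular a smooth Hilbert manifold. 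System \eqref{cauchyprob3} is exactly the equation for the integral curves of the vector field $F(\varphi,v)=(v,S_\varphi(v))$ on $T\mathcal{D}^s(\mathbb{R}^n)$.

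The first step is to verify that $F$ is a smooth vector field, i.e. that $(\varphi,v)\mapsto(v,S_\varphi(v))$ is a smooth map into $H^s(\mathbb{R}^n,\mathbb{R}^n)\times H^s(\mathbb{R}^n,\mathbb{R}^n)$. Its first component $(\varphi,v)\mapsto v$ is the restriction of a bounded linear projection and is therefore smooth. For the second component I would unwind the conclusion of Proposition \ref{thm:smoothspray}: writing $S_\varphi(v)=A_\varphi^{-1}(P_\varphi(v)-Q_\varphi(v)-(b-1)R_\varphi(v))$, the maps $(\varphi,v)\mapsto P_\varphi(v),Q_\varphi(v),R_\varphi(v)$ are smooth from $\mathcal{D}^s(\mathbb{R}^n)\times H^s(\mathbb{R}^n,\mathbb{R}^n)$ into $H^{s-r}(\mathbb{R}^n,\mathbb{R}^n)$, and $(\varphi,w)\mapsto A_\varphi^{-1}(w)$ is smooth from $\mathcal{D}^s(\mathbb{R}^n)\times H^{s-r}(\mathbb{R}^n,\mathbb{R}^n)$ into $H^s(\mathbb{R}^n,\mathbb{R}^n)$, where the hypotheses $s>n/2+1$ and $s\geq r$ guarantee $0\leq s-r\leq s$ and the smoothness of the relevant pointwise multiplications and of the inversion. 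Composing these smooth maps shows that $(\varphi,v)\mapsto S_\varphi(v)$ is smooth from $\mathcal{D}^s(\mathbb{R}^n)\times H^s(\mathbb{R}^n,\mathbb{R}^n)$ into $H^s(\mathbb{R}^n,\mathbb{R}^n)$, so that $F$ is indeed a smooth vector field on $T\mathcal{D}^s(\mathbb{R}^n)$.

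With smoothness of $F$ in hand, I would apply the classical Picard--Lindel\"{o}f (Cauchy--Lipschitz) theorem for ODEs on Banach manifolds: a smooth --- in particular locally Lipschitz --- vector field on a Banach manifold admits, through each point, a unique maximal integral curve, defined on an open interval containing the initial time and depending smoothly on the initial data. Applied to $F$ and the initial condition $(\varphi_0,v_0)$, this produces the unique non-extendable solution $(\varphi,v)$ on a maximal open interval $J_s(\varphi_0,v_0)\ni0$; non-extendability is precisely the maximality of $J_s(\varphi_0,v_0)$, and because $\mathcal{D}^s(\mathbb{R}^n)$ is open in $H^s(\mathbb{R}^n,\mathbb{R}^n)$ the curve remains in $T\mathcal{D}^s(\mathbb{R}^n)$ for all of $J_s(\varphi_0,v_0)$. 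Finally, $C^\infty$ regularity in time follows by a routine bootstrap: an integral curve is $C^1$, hence $t\mapsto F(\varphi(t),v(t))$ is $C^1$, hence $(\varphi,v)$ is $C^2$, and inductively $(\varphi,v)\in C^\infty(J_s(\varphi_0,v_0),T\mathcal{D}^s(\mathbb{R}^n))$.

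Since essentially all of the analytic content has already been packaged into Proposition \ref{thm:smoothspray}, I do not anticipate a genuine obstacle at this stage; the only points demanding a little care are checking that $S_\varphi(v)$ lands back in $H^s(\mathbb{R}^n,\mathbb{R}^n)$ rather than merely in $H^{s-r}(\mathbb{R}^n,\mathbb{R}^n)$ --- which is exactly where the smoothing property $A_\varphi^{-1}\colon H^{s-r}(\mathbb{R}^n,\mathbb{R}^n)\to H^s(\mathbb{R}^n,\mathbb{R}^n)$ and the hypothesis $s\geq r$ are used --- and confirming that the abstract Picard--Lindel\"{o}f machinery applies verbatim on the open set $T\mathcal{D}^s(\mathbb{R}^n)$ of a Hilbert space; both are standard.
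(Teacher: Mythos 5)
Your proposal is correct and follows essentially the same route as the paper: cite Proposition \ref{thm:smoothspray} for smoothness of the spray $F_s(\varphi,v)=(v,S_\varphi(v))$ on the open subset $T\mathcal{D}^s(\mathbb{R}^n)$ of a Hilbert space, then apply Picard--Lindel\"{o}f to get a unique non-extendable integral curve on a maximal open interval containing $0$. Your added remarks on the bootstrap for $C^\infty$ time-regularity and on $S_\varphi(v)$ landing back in $H^s$ via $A_\varphi^{-1}$ are correct elaborations of points the paper leaves implicit.
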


\begin{proof}
By Proposition \ref{thm:smoothspray}, the geodesic spray $F_s:(\varphi,v)\mapsto (v,S_\varphi(v))$ is smooth on the Banach space $T\mathcal{D}^s(\mathbb{R}^n)$. So by the Picard-Lindel\"{o}f theorem, it follows that for each $(\varphi_0,v_0)\in T\mathcal{D}^s(\mathbb{R}^n)$, there exists a unique solution $(\varphi,v)\in C^\infty(J_s(\varphi_0,v_0),T\mathcal{D}^s(\mathbb{R}^n))$ to the ODE
\[\frac{d}{dt}(\varphi,v)=F_s(\varphi,v)\]
on some maximal interval of existence $J_s(\varphi_0,v_0)$ which is open and contains 0. This is equivalent to a solution of the Cauchy problem stated in the theorem. 
\end{proof}

As a corollary, we obtain local well-posedness of the $b$-equation in $H^s(\mathbb{R}^n,\mathbb{R}^n)$ for appropriate $s$.

\begin{corollary}
    Let $A$ be a Fourier multiplier in the class $\mathcal{E}^r$ for $r\geq 1$. Let $s>n/2+1$ with $s\geq r$. The $b$-equation \eqref{eqn:bequationdimn} has for any initial data $u_0\in H^s(\mathbb{R}^n,\mathbb{R}^n),$ a unique non-extendable smooth solution 
    $u\in C^0(J,H^s(\mathbb{R}^n,\mathbb{R}^n))\cap C^1(J,H^{s-1}(\mathbb{R}^n,\mathbb{R}^n)).$
    The maximal interval of existence $J$ is open and contains $0.$
\end{corollary}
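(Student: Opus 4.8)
The plan is to deduce the corollary from the preceding theorem about the flow on $T\mathcal{D}^s(\mathbb{R}^n)$ together with the equivalence in Proposition \ref{system1}. Given $u_0\in H^s(\mathbb{R}^n,\mathbb{R}^n)$, I would apply that theorem with the initial data $(\varphi_0,v_0)=(\operatorname{id},u_0)\in T\mathcal{D}^s(\mathbb{R}^n)$ to obtain a unique non-extendable solution $(\varphi,v)\in C^\infty(J,T\mathcal{D}^s(\mathbb{R}^n))$ of \eqref{cauchyprob3}, where $J:=J_s(\operatorname{id},u_0)$ is open and contains $0$. Set $u:=v\circ\varphi^{-1}$; since each $\varphi(t)$ lies in $\mathcal{D}^s(\mathbb{R}^n)$ and composition is well-defined on $H^s(\mathbb{R}^n,\mathbb{R}^n)\times\mathcal{D}^s(\mathbb{R}^n)$, $u(t)$ makes sense, $u(0)=u_0$, and $\varphi_t=v=u\circ\varphi$ shows that $\varphi$ is the Lagrangian flow of $u$. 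By Proposition \ref{system1}, $u$ then solves \eqref{eqn:bequationdimn}.

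The substantive step is the regularity claim $u\in C^0(J,H^s(\mathbb{R}^n,\mathbb{R}^n))\cap C^1(J,H^{s-1}(\mathbb{R}^n,\mathbb{R}^n))$. For the $C^0$ part one uses that, for $s>n/2+1$, the composition map $(w,\psi)\mapsto w\circ\psi$ on $H^s(\mathbb{R}^n,\mathbb{R}^n)\times\mathcal{D}^s(\mathbb{R}^n)$ and the inversion map $\psi\mapsto\psi^{-1}$ on $\mathcal{D}^s(\mathbb{R}^n)$ are continuous (Inci--Kappeler--Topalov); composing with the continuous curve $t\mapsto(v(t),\varphi(t))$ gives $u\in C^0(J,H^s)$. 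These maps are \emph{not} differentiable into $H^s$, so the time derivative must be recovered from the equation: by \eqref{eqn:bequationdimn}, $u_t=-A^{-1}\bigl(\nabla_u(Au)+(\nabla u)^T(Au)+(b-1)\operatorname{div}(u)Au\bigr)$; since $A$ has order $r\le s$ we have $Au\in H^{s-r}$, hence $\nabla(Au)\in H^{s-r-1}$, and, using the Sobolev multiplication estimate of Proposition \ref{sobolevproperties} (extended in the standard way to the exponent $s-r-1$, which is admissible because $r\le 2s-1$), each term inside the parenthesis lies in $H^{s-r-1}$; applying $A^{-1}$, which smooths by order $r$, puts $u_t$ in $H^{s-1}$, with continuity in $t$ following again from continuity of composition. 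Hence $u\in C^1(J,H^{s-1})$.

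For uniqueness and non-extendability, suppose $\tilde u\in C^0(J',H^s)\cap C^1(J',H^{s-1})$ solves \eqref{eqn:bequationdimn} with $\tilde u(0)=u_0$. Since $H^s\hookrightarrow C_0^1(\mathbb{R}^n,\mathbb{R}^n)$ by Proposition \ref{sobolevproperties}(2), the flow ODE $\tilde\varphi_t=\tilde u\circ\tilde\varphi$, $\tilde\varphi(0)=\operatorname{id}$, is an ODE in the Banach space $H^s(\mathbb{R}^n,\mathbb{R}^n)$ whose right-hand side is continuous in $t$ and locally Lipschitz in $\tilde\varphi$, so it has a unique solution $\tilde\varphi$ which stays in $\mathcal{D}^s(\mathbb{R}^n)$; setting $\tilde v:=\tilde u\circ\tilde\varphi$, Proposition \ref{system1} shows $(\tilde\varphi,\tilde v)$ solves \eqref{cauchyprob3} with data $(\operatorname{id},u_0)$, so the uniqueness/maximality clause of the preceding theorem forces $(\tilde\varphi,\tilde v)=(\varphi,v)$ and $J'\subseteq J$; thus $\tilde u=u$ on $J'$, so $u$ is the unique non-extendable solution and $J$ is its maximal interval of existence.

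The main obstacle is exactly the regularity transfer in the second paragraph: the map $(\varphi,v)\mapsto v\circ\varphi^{-1}$ that carries the smooth flow on $T\mathcal{D}^s(\mathbb{R}^n)$ back to an Eulerian solution $u$ is only continuous, not differentiable, into $H^s$ --- reflecting the genuine loss of one spatial derivative --- so the $C^1$-in-time regularity cannot be read off the flow directly and must instead be extracted from the $b$-equation, which requires a careful accounting of which Sobolev spaces the terms $\nabla_u(Au)$, $(\nabla u)^T(Au)$, and $\operatorname{div}(u)Au$ actually land in before and after the order-$r$ gain from $A^{-1}$.
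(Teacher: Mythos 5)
Your proof is correct and follows exactly the argument the paper intends (the paper states this corollary without proof, as the standard consequence of the preceding theorem via $u=v\circ\varphi^{-1}$, continuity of composition and inversion on $\mathcal{D}^s(\mathbb{R}^n)$, and recovery of $u_t\in H^{s-1}$ from the equation itself). One small caveat: in the uniqueness step, the map $\tilde\varphi\mapsto\tilde u(t)\circ\tilde\varphi$ is \emph{not} locally Lipschitz into $H^s$ (the difference is controlled by $\nabla\tilde u\in H^{s-1}$, so one only gets Lipschitz continuity into $H^{s-1}$); the existence and uniqueness of the flow of a time-dependent $H^s$ vector field for $s>n/2+1$ is nonetheless a standard result (e.g. via the pointwise ODE and the results of Inci--Kappeler--Topalov), so the conclusion stands, but it should be cited rather than derived from Picard--Lindel\"of in $H^s$.
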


To complete the picture by obtaining local well-posedness of the system \eqref{cauchyprob3} on the \textit{Frech\'{e}t} manifold $T\operatorname{Diff}_{H^\infty}(\mathbb{R}^n),$ and hence local well-posedness of the $b$-equation on $H^\infty(\mathbb{R}^n,\mathbb{R}^n)$, we apply a classical result of Ebin and Marsden \cite{ebin1970groups}. The remarkable feature of this result is that the maximal interval of existence does not depend on the regularity parameter $s$, ultimately due to the equivariance of the spray. 

\begin{remark}
    Observe that by definition of the spray $F_s(\varphi,v)=(v,S_\varphi(v)),$ it is \textit{equivariant} by the right action of $\mathcal{D}^s(\mathbb{R}^n)$ on the tangent bundle $T\mathcal{D}^s(\mathbb{R}^n)=\mathcal{D}^s(\mathbb{R}^n)\times H^s(\mathbb{R}^n,\mathbb{R}^n)$. Concretely, 
    \[F(\varphi\circ\psi,v\circ\psi)=F(\varphi,v)\circ\psi,\qquad\text{for all $(\varphi,v)\in\mathcal{D}^s(\mathbb{R}^n)\times H^s(\mathbb{R}^n,\mathbb{R}^n),\quad \psi\in\mathcal{D}^s(\mathbb{R}^n)$}.\]
    
\end{remark}

\begin{proposition}[No Loss, No Gain]\label{lem:nolossnogain}
Consider the Cauchy problem consisting of system \eqref{cauchyprob3} with initial data initial data $(\varphi_0,v_0)\in T\mathcal{D}^s(\mathbb{R}^n)$. Let $J_s(\varphi_0,v_0)$ be the maximal interval containing 0 on which a solution to this Cauchy problem exists. Given $(\varphi_0,v_0)\in T\mathcal{D}^{s+1}(\mathbb{R}^n),$ it holds that 
\[J_{s+1}(\varphi_0,v_0)
=J_s(\varphi_0,v_0)\]
for $s> n/2+1$ and $s\geq r$.
\end{proposition}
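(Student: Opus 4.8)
The plan is to prove the two inclusions $J_{s+1}(\varphi_0,v_0)\subseteq J_s(\varphi_0,v_0)$ and $J_s(\varphi_0,v_0)\subseteq J_{s+1}(\varphi_0,v_0)$ separately, following the Ebin--Marsden argument \cite{ebin1970groups} as adapted to $\mathbb{R}^n$ by Bauer, Escher, and Kolev \cite{bauer2015local}. The first is routine: since $s>n/2+1$ and $s\ge r$ force $s+1>n/2+1$ and $s+1\ge r$, Proposition \ref{thm:smoothspray} applies at level $s+1$ as well, so the spray $F_{s+1}$ on $T\mathcal{D}^{s+1}(\mathbb{R}^n)$ is the restriction of $F_s$; hence any solution of \eqref{cauchyprob3} in $T\mathcal{D}^{s+1}(\mathbb{R}^n)$ is also a solution in $T\mathcal{D}^s(\mathbb{R}^n)$, and by uniqueness of the non-extendable $H^s$-solution (Picard--Lindel\"{o}f) the latter extends the former, giving $J_{s+1}(\varphi_0,v_0)\subseteq J_s(\varphi_0,v_0)$.

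\textbf{Reduction to $\varphi_0=\mathrm{id}$.} For the reverse inclusion I would first exploit equivariance. As $\mathcal{D}^s(\mathbb{R}^n)$ is a topological group and $\varphi_0\in\mathcal{D}^{s+1}(\mathbb{R}^n)$, the vector field $u_0:=v_0\circ\varphi_0^{-1}$ lies in $H^{s+1}(\mathbb{R}^n,\mathbb{R}^n)$; and by the right-equivariance of the spray recorded in the preceding remark, $(\varphi,v)$ solves \eqref{cauchyprob3} with data $(\varphi_0,v_0)$ iff $(\varphi\circ\varphi_0^{-1},v\circ\varphi_0^{-1})$ solves it with data $(\mathrm{id},u_0)$ on the same interval, at level $s$ and at level $s+1$ alike. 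Thus $J_s(\varphi_0,v_0)=J_s(\mathrm{id},u_0)$ and $J_{s+1}(\varphi_0,v_0)=J_{s+1}(\mathrm{id},u_0)$, and it suffices to show $J_s(\mathrm{id},u_0)\subseteq J_{s+1}(\mathrm{id},u_0)$ for $u_0\in H^{s+1}(\mathbb{R}^n,\mathbb{R}^n)$.

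\textbf{No loss of regularity.} Let $(\varphi(t),v(t))$ be the maximal $H^s$-solution with data $(\mathrm{id},u_0)$ on $J_s:=J_s(\mathrm{id},u_0)$, and let $\Phi^s$ be the smooth local flow of the spray $F_s$ on $T\mathcal{D}^s(\mathbb{R}^n)$. Fixing $t\in J_s$, equivariance gives $\Phi^s_t(\psi,u_0\circ\psi)=(\varphi(t)\circ\psi,\,v(t)\circ\psi)$ for all $\psi$ near $\mathrm{id}$ in $\mathcal{D}^s(\mathbb{R}^n)$. Since $u_0\in H^{s+1}(\mathbb{R}^n,\mathbb{R}^n)$, the orbit map $\psi\mapsto(\psi,u_0\circ\psi)$ is of class $C^1$ from $\mathcal{D}^s(\mathbb{R}^n)$ into $T\mathcal{D}^s(\mathbb{R}^n)$ — one extra order of Sobolev regularity of the composed function buys one extra order of differentiability of the orbit map, by the composition-regularity analysis of Inci, Kappeler, and Topalov \cite{inci2012regularity} — so, composing with the smooth map $\Phi^s_t$, the map $\psi\mapsto(\varphi(t)\circ\psi,v(t)\circ\psi)$ is $C^1$ near $\mathrm{id}$. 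Differentiating at $\psi=\mathrm{id}$ and invoking the converse direction of that analysis, together with the a priori facts $\varphi(t)-\mathrm{id},\,v(t)\in H^s(\mathbb{R}^n,\mathbb{R}^n)$ (which fix the behaviour at spatial infinity), one concludes $\varphi(t)\in\mathcal{D}^{s+1}(\mathbb{R}^n)$ and $v(t)\in H^{s+1}(\mathbb{R}^n,\mathbb{R}^n)$, with $\|\varphi(t)-\mathrm{id}\|_{H^{s+1}}+\|v(t)\|_{H^{s+1}}$ bounded on compact subintervals of $J_s$ in terms of $\|u_0\|_{H^{s+1}}$ and $\sup_t\|D\Phi^s_t(\mathrm{id},u_0)\|$, the latter being finite by smoothness of the flow. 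Hence the $H^s$-geodesic through $(\mathrm{id},u_0)$ stays in $T\mathcal{D}^{s+1}(\mathbb{R}^n)$ and its $H^{s+1}$-norm does not blow up on compact subintervals of $J_s$.

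\textbf{Conclusion and the main obstacle.} To finish, for a compact $[a,b]\subseteq J_s$ with $0\in[a,b]$ the set $\{(\varphi(t),v(t)):t\in[a,b]\}$ is a bounded subset of $T\mathcal{D}^{s+1}(\mathbb{R}^n)$ staying uniformly inside $\mathcal{D}^{s+1}(\mathbb{R}^n)$; applying Picard--Lindel\"{o}f to the smooth spray $F_{s+1}$ along it, covering $[a,b]$ by finitely many short intervals, solving the $H^{s+1}$-Cauchy problem on each and matching the pieces with $(\varphi,v)$ via uniqueness at level $s$, one sees $(\varphi,v)|_{[a,b]}$ is the $H^{s+1}$-solution, so $[a,b]\subseteq J_{s+1}$. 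Exhausting $J_s$ gives $J_s(\mathrm{id},u_0)\subseteq J_{s+1}(\mathrm{id},u_0)$, which together with the first inclusion yields $J_{s+1}=J_s$; undoing the reduction of the second paragraph completes the proof. I expect the genuine difficulty to be the ``no loss'' step — specifically, recovering \emph{global-in-space} $H^{s+1}$-regularity of $\varphi(t),v(t)$ from the $C^1$-dependence on $\psi$ of the right translation $\psi\mapsto(\varphi(t)\circ\psi,v(t)\circ\psi)$. On a compact manifold this recovery is immediate because constants lie in every Sobolev space, whereas on $\mathbb{R}^n$ it is exactly where the delicate composition-regularity estimates of \cite{inci2012regularity}, used also in \cite{bauer2015local}, are indispensable; verifying the $C^1$-dependence and correctly identifying the Fr\'{e}chet derivative is the accompanying technical point, while the rest is bookkeeping with Propositions \ref{sobolevproperties} and \ref{thm:smoothspray}.
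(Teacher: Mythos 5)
Your overall architecture (two inclusions, equivariance of the spray, patching with Picard--Lindel\"of) matches the paper's, and the easy inclusion $J_{s+1}(\varphi_0,v_0)\subseteq J_s(\varphi_0,v_0)$ is handled correctly. The gap sits exactly at the step you yourself flag as the main obstacle: you never actually carry out the ``no loss'' upgrade from $H^s$ to $H^{s+1}$. Differentiating the orbit map $\psi\mapsto(\varphi(t)\circ\psi,v(t)\circ\psi)$ at $\psi=\mathrm{id}$ \emph{inside} $\mathcal{D}^s(\mathbb{R}^n)$ only produces the directional derivatives $w\mapsto(\nabla\varphi(t)\cdot w,\nabla v(t)\cdot w)$ for directions $w\in H^s(\mathbb{R}^n,\mathbb{R}^n)$. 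Knowing that $\nabla v(t)\cdot w\in H^s$ for every $w\in H^s$ yields at best $H^{s+1}$ regularity \emph{locally} (test with bumps equal to $e_j$ on a ball $B_R$; their $H^s$-norms grow like $R^{n/2}$), so no global $H^{s+1}$ bound follows, and the ``converse direction'' of the composition-regularity analysis that you invoke is not something you can extract from differentiability along $H^s$ directions alone. As written, the crux of the proposition is asserted rather than proved.

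The paper closes exactly this hole by enlarging the symmetry: although a constant vector field $u$ does not lie in $H^s(\mathbb{R}^n,\mathbb{R}^n)$, its flow $\psi^u_h(x)=x+hu$ still acts on $\mathcal{D}^s(\mathbb{R}^n)$ by $\psi^u_h\cdot\varphi=\varphi(\cdot+hu)$, and the spray and its flow $\Phi_s$ are equivariant under this action. For data in $T\mathcal{D}^{s+1}(\mathbb{R}^n)$ the curve $h\mapsto\psi^u_h\cdot(\varphi_0,v_0)$ is $C^1$ into $T\mathcal{D}^s(\mathbb{R}^n)$, so differentiating the identity $\Phi_s(t,\psi^u_h\cdot(\varphi_0,v_0))=\psi^u_h\cdot\Phi_s(t,(\varphi_0,v_0))$ at $h=0$ gives
\[
\partial_{(\varphi,v)}\Phi_s(t,(\varphi_0,v_0))\cdot\bigl(\nabla\varphi_0\cdot u,\nabla_u v_0\bigr)=\bigl(\nabla\varphi(t)\cdot u,\nabla_u v(t)\bigr),
\]
whose left-hand side lies in $H^s\times H^s$ because the derivative of the flow in the initial-data slot maps into the $H^s$ tangent space. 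Ranging over $u=e_1,\dots,e_n$ yields $\nabla\varphi(t),\nabla v(t)\in H^s$ \emph{globally}, i.e.\ $(\varphi(t),v(t))\in T\mathcal{D}^{s+1}(\mathbb{R}^n)$ for all $t\in J_s(\varphi_0,v_0)$. This translation trick is the missing idea; the remainder of your outline (the reduction to $\varphi_0=\mathrm{id}$, which the paper does not need, and the patching argument at level $s+1$) is fine once it is in place.
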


\begin{proof}
If $(\varphi_0,v_0)\in T\mathcal{D}^{s+1}(\mathbb{R}^n)\subset T\mathcal{D}^s(\mathbb{R}^n)$, then we can solve \eqref{cauchyprob3} in both $T\mathcal{D}^{s+1}(\mathbb{R}^n)$ and $T\mathcal{D}^s(\mathbb{R}^n)$. Since $J_s(\varphi_0,v_0)$ is the largest interval on which the solution exists while maintaining $H^s$ regularity, we must have $J_{s+1}(\varphi_0,v_0)\subset J_s(\varphi_0,v_0).$ Let $\Phi_s$ be the flow of the spray $F_s$, and let $u$ be a constant vector field on $\mathbb{R}^n$. The flow $\psi^u_h(x):=x+hu$ of $u$ is not in $\mathcal{D}^s(\mathbb{R}^n)$ since $H^s(\mathbb{R}^n,\mathbb{R}^n)$ does not contain constant vector fields. But compose the flow with $\varphi\in\mathcal{D}^s(\mathbb{R}^n)$, and we get $\varphi\circ\psi^u_h\in\mathcal{D}^s(\mathbb{R}^n)$. We can therefore define the action of $\psi^u_h$ on $\mathcal{D}^s(\mathbb{R}^n)$ by
\[(\psi^u_h\cdot \varphi)(x):=\varphi(x+hu),\qquad \varphi\in\mathcal{D}^s(\mathbb{R}^n),\quad h\in\mathbb{R},\quad x,u\in\mathbb{R}^n.\]
This action on the diffeomorphism group naturally induces an action on the tangent bundle $T\mathcal{D}^s(\mathbb{R}^n)$ via
\[(\psi^u_h\cdot (\varphi,v))(x):=(\varphi(x+hu),v(x+hu)),\qquad (\varphi,v)\in T\mathcal{D}^s(\mathbb{R}^n),\quad h\in\mathbb{R},\quad x,u\in\mathbb{R}^n.\]
Observe that for $(\varphi,v)\in T\mathcal{D}^{s+1}(\mathbb{R}^n)$, the map
$h\mapsto \psi^u_h\cdot (\varphi,v)$ from $\mathbb{R}$ to $T\mathcal{D}^s(\mathbb{R}^n)$ is $C^1$ with 
\[\frac{d}{dh}(\psi^u_h\cdot (\varphi,v)(x))=(\nabla\varphi(x+hu)\cdot u,\nabla_uv(x+hu)).\] 
Hence, if $(\varphi_0,v_0)\in T\mathcal{D}^{s+1}(\mathbb{R}^n)$, we have
\[
\frac{d}{dh}\Big|_{h=0}\Phi_s(t,\psi^u_h\cdot(\varphi_0,v_0)) =\partial_{(\varphi,v)}\Phi_h(t,(\varphi_0,v_0))\cdot (\nabla\varphi_0(x)\cdot u,\nabla_uv_0(x)).\]
On the other hand, by equivariance of the spray $F_s$, we see that $F_s$ is invariant under each right-translation $R_{\varphi}$ where $\varphi\in\mathcal{D}^s(\mathbb{R}^n).$ The same is true for its flow $\Phi_s$:
\[\Phi_s(t,\psi^u_h\cdot(\varphi_0,v_0))=\psi^u_h\cdot \Phi_s(t,(\varphi_0,v_0)),\quad\text{for all}\quad t\in J_s(\varphi_0,v_0),\,h\in\mathbb{R}. \]
Thus we get 
\[\partial_{(\varphi,v)}\Phi_s(t,(\varphi_0,v_0))\cdot (\nabla\varphi_0(x)\cdot u,\nabla_uv_0(x))=(\nabla\varphi(t)\cdot u,\nabla_uv(t)).\]
But also $\partial_{(\varphi,v)}\Phi_s(t,(\varphi_0,v_0))\cdot (\nabla\varphi_0(x)\cdot u,\nabla_uv_0(x))\in H^s(\mathbb{R}^n,\mathbb{R}^n)\times H^s(\mathbb{R}^n,\mathbb{R}^n)$ for all constant vector fields $u$, and therefore
\[(\varphi(t),v(t))\in T\mathcal{D}^{s+1}(\mathbb{R}^n)\quad\text{for all}\quad t\in J_s(\varphi_0,v_0).\]
So if we start in the smaller space: $(\varphi_0,v_0)\in T\mathcal{D}^{s+1}(\mathbb{R}^n)\subset T\mathcal{D}^s(\mathbb{R}^n)$, and solve \eqref{cauchyprob3} in $T\mathcal{D}^s(\mathbb{R}^n)$, we find that the solution effectively remains in the smaller, higher-regularity space for all $t\in J_s(\varphi_0,v_0).$ From this we conclude
$J_s(\varphi_0,v_0)= J_{s+1}(\varphi_0,v_0)$.
\end{proof}

Proposition \ref{lem:nolossnogain} states that there is no loss of spatial regularity during the time evolution of \eqref{cauchyprob3}. This implies the following well-posedness result.

\begin{theorem}
Let $A$ be a Fourier multiplier of class $\mathcal{E}^r$ for $r\geq1$. Define 
\[S_\varphi=R_\varphi\circ S\circ R_{\varphi^{-1}}\quad\text{and}\quad S(u)=A^{-1}\left([A,\nabla_u]u-(\nabla u)^T(Au)-(b-1)\operatorname{div}(u)Au\right)\]
as before.
For each $(\varphi_0,v_0)\in T\operatorname{Diff}_{H^\infty}(\mathbb{R}^n)$, there exists a unique non-extendable solution
\[(\varphi,v)\in C^\infty(J,T\operatorname{Diff}_{H^\infty}(\mathbb{R}^n))\]
of the Cauchy problem consisting of system \eqref{cauchyprob3} with initial data $(\varphi_0,v_0)$ on the maximal interval of existence $J$, which is open and contains 0.
\end{theorem}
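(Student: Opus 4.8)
The plan is to patch together the finite-regularity solutions on the Banach manifolds $T\mathcal{D}^s(\mathbb{R}^n)$ and invoke the no-loss-no-gain result to see that the resulting curve takes values in the Fr\'echet manifold $T\operatorname{Diff}_{H^\infty}(\mathbb{R}^n)$ on an interval that does not shrink as $s\to\infty$.

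First I would fix $(\varphi_0,v_0)\in T\operatorname{Diff}_{H^\infty}(\mathbb{R}^n)=\bigcap_{s}T\mathcal{D}^s(\mathbb{R}^n)$ and an index $s_0>n/2+1$ with $s_0\ge r$. For each $s\ge s_0$ the previous theorem gives a unique non-extendable solution $(\varphi^s,v^s)\in C^\infty(J_s(\varphi_0,v_0),T\mathcal{D}^s(\mathbb{R}^n))$ of \eqref{cauchyprob3}, and by uniqueness of solutions in $T\mathcal{D}^{s_0}(\mathbb{R}^n)$ these are restrictions of one another, $(\varphi^s,v^s)=(\varphi^{s_0},v^{s_0})$ on $J_s(\varphi_0,v_0)\subseteq J_{s_0}(\varphi_0,v_0)$. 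Iterating the No Loss, No Gain Proposition \ref{lem:nolossnogain} (its hypothesis $(\varphi_0,v_0)\in T\mathcal{D}^{s+1}(\mathbb{R}^n)$ holds for every $s$ because the initial data is $H^\infty$), together with the monotonicity $J_{s'}(\varphi_0,v_0)\subseteq J_s(\varphi_0,v_0)$ for $s'\ge s$, yields $J_s(\varphi_0,v_0)=J_{s_0}(\varphi_0,v_0)=:J$ for all $s\ge s_0$. Moreover the proof of Proposition \ref{lem:nolossnogain} shows that the $T\mathcal{D}^{s_0}$-solution on $J$ in fact takes values in $T\mathcal{D}^s(\mathbb{R}^n)$ for every $s\ge s_0$, hence in $\bigcap_{s\ge s_0}T\mathcal{D}^s(\mathbb{R}^n)=T\operatorname{Diff}_{H^\infty}(\mathbb{R}^n)$ for all $t\in J$; set $(\varphi,v):=(\varphi^{s_0},v^{s_0})$.

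Next I would settle the smoothness and maximality claims. For every $s\ge s_0$ the curve $t\mapsto(\varphi(t),v(t))$ is a $C^\infty$ curve into $T\mathcal{D}^s(\mathbb{R}^n)$ by Picard--Lindel\"of; since $H^\infty(\mathbb{R}^n,\mathbb{R}^n)=\bigcap_s H^s(\mathbb{R}^n,\mathbb{R}^n)$ carries the projective-limit (Fr\'echet) topology of the $H^s(\mathbb{R}^n,\mathbb{R}^n)$, a curve into $T\operatorname{Diff}_{H^\infty}(\mathbb{R}^n)$ is smooth exactly when it is smooth into each $T\mathcal{D}^s(\mathbb{R}^n)$, so $(\varphi,v)\in C^\infty(J,T\operatorname{Diff}_{H^\infty}(\mathbb{R}^n))$. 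Uniqueness follows because any $H^\infty$ solution is in particular a solution in $T\mathcal{D}^{s_0}(\mathbb{R}^n)$, where solutions are unique; and an extension of $(\varphi,v)$ as an $H^\infty$ solution past an endpoint of $J$ would be, a fortiori, an extension of the $T\mathcal{D}^{s_0}$-solution past the endpoint of $J_{s_0}(\varphi_0,v_0)=J$, contradicting non-extendability at level $s_0$. Combined with Proposition \ref{system1}, this also delivers the main theorem on local well-posedness of the $b$-equation \eqref{eqn:bequationdimn} in $H^\infty(\mathbb{R}^n,\mathbb{R}^n)$.

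The only genuinely delicate point is the passage from levelwise smoothness to smoothness into the Fr\'echet manifold, i.e.\ the identification of $C^\infty$ curves into $T\operatorname{Diff}_{H^\infty}(\mathbb{R}^n)$ with compatible families of $C^\infty$ curves into the $T\mathcal{D}^s(\mathbb{R}^n)$; this rests on $\operatorname{Diff}_{H^\infty}(\mathbb{R}^n)$ being a regular Fr\'echet--Lie group modeled on $H^\infty(\mathbb{R}^n,\mathbb{R}^n)=\bigcap_s H^s(\mathbb{R}^n,\mathbb{R}^n)$ and on the standard fact that differentiation of curves commutes with projective limits of Banach spaces. Everything else is bookkeeping on maximal intervals together with uniqueness at level $s_0$.
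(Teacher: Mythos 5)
Your proposal is correct and follows essentially the same route as the paper: combine the finite-regularity existence theorem with Proposition \ref{lem:nolossnogain} to keep the solution in every $T\mathcal{D}^s(\mathbb{R}^n)$ on a common maximal interval, and hence in $T\operatorname{Diff}_{H^\infty}(\mathbb{R}^n)$. In fact you supply several details the paper's very terse proof leaves implicit (coherence of the levelwise solutions via uniqueness at a base level $s_0$, equality of the maximal intervals $J_s$ for all $s$ via iterating the no-loss--no-gain step together with monotonicity, and smoothness of the curve into the projective limit), so your write-up is, if anything, more complete than the published one.
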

\begin{proof}
Proposition \ref{lem:nolossnogain} asserts that there is no loss of spatial regularity during the time evolution of \eqref{cauchyprob3}. There is also no gain of regularity in the sense that if $(\varphi_0,v_0)\in T\mathcal{D}^s(\mathbb{R}^n)$ such that $(\varphi(t_1),v(t_1))\in T\mathcal{D}^{s+1}(\mathbb{R}^n)$ for some $t_1\in J_s(\varphi_0,v_0)$, then it follows that $(\varphi_0,v_0)\in T\mathcal{D}^{s+1}(\mathbb{R}^n).$ This holds by reversing the time direction and applying uniqueness. This means that a solution keeps the exact regularity it starts with. Hence, if the initial conditions are smooth (i.e. $(\varphi_0,v_0)\in T\operatorname{Diff}_{H^\infty}(\mathbb{R}^n)$), then the solution will remain smooth under the time evolution of \eqref{cauchyprob3}. This proves the theorem.
\end{proof}

As a corollary, we obtain local well-posedness of the $b$-equation in the smooth category.

\begin{corollary}\label{mainresult}
The n-dimensional $b$-equation \eqref{b-equation} with inertia operator a Fourier multiplier of class $\mathcal{E}^r$ with $r\geq1$ has for any initial data $u_0\in H^\infty(\mathbb{R}^n,\mathbb{R}^n)$, a unique non-extendable smooth solution $u\in C^\infty(J,H^\infty(\mathbb{R}^n,\mathbb{R}^n))$. 
\end{corollary}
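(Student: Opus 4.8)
The plan is to deduce the corollary from the theorem just proved on $T\operatorname{Diff}_{H^\infty}(\mathbb{R}^n)$ together with Proposition \ref{system1}, which identifies solutions of the $b$-equation with solutions of the Lagrangian system \eqref{cauchyprob3}. Given $u_0\in H^\infty(\mathbb{R}^n,\mathbb{R}^n)$, I would feed in the initial data $(\varphi_0,v_0):=(\operatorname{id},u_0)$, which lies in $T\operatorname{Diff}_{H^\infty}(\mathbb{R}^n)=\operatorname{Diff}_{H^\infty}(\mathbb{R}^n)\times H^\infty(\mathbb{R}^n,\mathbb{R}^n)$ since $\operatorname{id}=\operatorname{id}+0\in\operatorname{Diff}_{H^\infty}(\mathbb{R}^n)$. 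The preceding theorem then yields a unique non-extendable smooth solution $(\varphi,v)\in C^\infty(J,T\operatorname{Diff}_{H^\infty}(\mathbb{R}^n))$ of \eqref{cauchyprob3} with this data on a maximal open interval $J\ni 0$.

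Next I would set $u:=v\circ\varphi^{-1}$ and check it is the desired solution. Because $(\varphi,v)$ solves \eqref{cauchyprob3}, we have $\varphi_t=v=(v\circ\varphi^{-1})\circ\varphi=u\circ\varphi$, so $\varphi$ is by definition the Lagrangian flow of $u$, and $u(0)=v_0\circ\varphi_0^{-1}=u_0$. The ``if'' direction of Proposition \ref{system1} — whose computation is insensitive to the Sobolev index and applies verbatim with $H^\infty$ in place of $H^s$ — then shows that $u$ solves \eqref{eqn:bequationdimn}, equivalently \eqref{b-equation}. For the regularity assertion $u\in C^\infty(J,H^\infty(\mathbb{R}^n,\mathbb{R}^n))$, I would invoke that $\operatorname{Diff}_{H^\infty}(\mathbb{R}^n)$ is a regular Fr\'echet-Lie group on which composition and inversion are smooth; since $t\mapsto\varphi(t)$ and $t\mapsto v(t)$ are smooth curves into $\operatorname{Diff}_{H^\infty}(\mathbb{R}^n)$ and $H^\infty(\mathbb{R}^n,\mathbb{R}^n)$ respectively, and right translation identifies $T_\varphi\operatorname{Diff}_{H^\infty}(\mathbb{R}^n)$ with $H^\infty(\mathbb{R}^n,\mathbb{R}^n)$, the composite $t\mapsto v(t)\circ\varphi(t)^{-1}$ is a smooth $H^\infty$-valued curve.

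Uniqueness and non-extendability transfer through the same bijection: if $\tilde u$ is another smooth solution of \eqref{b-equation} with $\tilde u(0)=u_0$, its Lagrangian flow $\tilde\varphi$ satisfies $\tilde\varphi(0)=\operatorname{id}$, and by the ``only if'' direction of Proposition \ref{system1} the pair $(\tilde\varphi,\tilde u\circ\tilde\varphi)$ solves \eqref{cauchyprob3} with data $(\operatorname{id},u_0)$; uniqueness in the preceding theorem forces $(\tilde\varphi,\tilde u\circ\tilde\varphi)=(\varphi,v)$ on the common interval, hence $\tilde u=u$. Since $J$ is maximal for \eqref{cauchyprob3} and the correspondence $u\leftrightarrow(\varphi,v)$ is a bijection between solution sets respecting time intervals, $J$ is maximal for the $b$-equation as well.

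The main obstacle is precisely the regularity transfer in the second step: guaranteeing that passing from the Lagrangian pair $(\varphi,v)$ back to the Eulerian velocity $u=v\circ\varphi^{-1}$ preserves both smoothness in time and spatial $H^\infty$-regularity. This is exactly where working on the regular Fr\'echet-Lie group $\operatorname{Diff}_{H^\infty}(\mathbb{R}^n)$ — rather than on the Hilbert manifold $\mathcal{D}^s(\mathbb{R}^n)$, where composition and inversion are not smooth — is indispensable, and it is why the no-loss-no-gain result of Proposition \ref{lem:nolossnogain} had to be secured first so that an $H^\infty$ solution of \eqref{cauchyprob3} exists at all.
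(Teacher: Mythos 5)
Your proposal is correct and follows exactly the route the paper intends: the paper leaves this corollary as an immediate consequence of the preceding $H^\infty$ Lagrangian well-posedness theorem combined with the equivalence of Proposition \ref{system1}, and your write-up supplies precisely those details, including the key observation that smoothness of composition and inversion on the regular Fr\'echet-Lie group $\operatorname{Diff}_{H^\infty}(\mathbb{R}^n)$ is what makes the passage $u=v\circ\varphi^{-1}$ preserve smoothness in time and $H^\infty$ spatial regularity.
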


\begin{remark}
    Since the Sobolev inertia operator $\Lambda^{2s}=\textbf{op}((1+|\xi|^2)^s)$ is a Fourier multiplier of order at least $1$ when $s\ge1/2$, this well-posedness result applies to $\Lambda^{2s}$. 
\end{remark}

\bibliographystyle{abbrv}
\bibliography{refs}

\end{document}